\documentclass[10pt]{amsart}

\usepackage{times,amsfonts,amsmath,amstext,amsbsy,amssymb,
amsopn,amsthm,upref,eucal, amscd}

\usepackage[colorlinks=true]{hyperref}
\usepackage[T1]{fontenc}
\usepackage{multirow,hhline,longtable}
\usepackage{marginnote}
\usepackage{color,tcolorbox,colortbl}
\usepackage[displaymath, mathlines, pagewise]{lineno}

\usepackage{graphicx}
\usepackage{algpseudocode,algorithm}
\usepackage{caption}
\usepackage[skip=0.5ex]{subcaption}

\textwidth=170mm \textheight=230mm 
\hoffset=0pt \voffset=-48pt \oddsidemargin=-5mm \topmargin = 12pt
\headsep=12pt \headheight=15pt
\evensidemargin=-5mm 

\newtheorem{theorem}{Theorem}[section]
\newtheorem{lemma}[theorem]{Lemma}

\numberwithin{equation}{section}

\theoremstyle{definition}

\newtheorem{example}[theorem]{Example}

\newtheorem{remark}[theorem]{Remark}

\def\leq{\leqslant }

\makeatletter
\setlength{\parskip}{\medskipamount}
\setlength{\parindent}{12pt}
\newcommand{\@minipagerestore}{\setlength{\parskip}{\medskipamount}
\setlength{\parindent}{12pt}}
\makeatother

\usepackage{accents}

\newcommand\intpx[2]{S_{(#1;#2)}}

\newcommand\pvlong[1]{\textcolor{black}{#1} }
\newcommand\pvshort[1]{\textcolor{black}{#1} }

\newcommand*\Let[2]{\State #1 $\gets$ #2}
\algrenewcommand\algorithmicrequire{\textbf{Input:}}
\algrenewcommand\algorithmicensure{\textbf{Output:}}


\algrenewcommand\alglinenumber[1]{ \sf\scriptsize{#1}}
\newcommand{\nocontentsline}[3]{}
\newcommand{\tocless}[2]{\bgroup\let\addcontentsline=\nocontentsline#1{#2}\egroup}

\begin{document}

\title[The graph of the dimension function of the classical spectra]{On the graph of the dimension function of the Lagrange and Markov spectra} 

\author[C. Matheus, C. G. Moreira and P. Vytnova]{Carlos Matheus,
Carlos Gustavo Moreira and Polina Vytnova} 

\address{C. Matheus, CNRS \& \'Ecole Polytechnique, CNRS (UMR 7640), 91128, Palaiseau, France.}
\email{matheus.cmss@gmail.com}

\address{C. G. Moreira, School of Mathematical Sciences, Nankai University, Tianjin 300071, P. R.China, and IMPA, Estrada Dona Castorina 110, CEP 22460-320, Rio de Janeiro, Brazil.}
\email{gugu@impa.br}

 \address{P. Vytnova, Department of Mathematics, University of Surrey,
 Guildford, GU2 7XH, UK.}
\email{P.Vytnova@surrey.ac.uk}

\thanks{The second author was partly supported by CNPq, FAPERJ, and INCTMAT project of J. Palis. The third author was partly supported by EPSRC grant EP/T001674/1. }

\date{\today}

\begin{abstract}
    We study the graph of the function $d(t)$ encoding the Hausdorff dimensions
    of the classical Lagrange and Markov spectra with half-infinite lines of the
    form $(-\infty, t)$. For this sake, we use the fact that the Hausdorff
    dimension of dynamically defined Cantor sets drop after erasing an element of its
    Markov partition to determine twelve non-trivial plateaux of $d(t)$. Next,
    we employ rigorous numerical methods (from our recent joint paper with
    Pollicott) to produce approximations of the graph of $d(t)$ between these
    twelve plateaux. As a corollary, we prove that the largest ten non-trivial
    plateaux of $d(t)$ are exactly those plateaux with lengths $>0.005$. 
\end{abstract}\maketitle


\section{Introduction}
\label{s:intro}
The best constants of Diophantine approximations of irrational numbers and real
indefinite binary quadratic forms constitute two subsets of the real line known
as the Lagrange and Markov spectra. More concretely, the \emph{Lagrange
spectrum} is defined by
$$L:=\left\{\limsup\limits_{\substack{p,q\to\infty \\ p\in\mathbb{Z},
q\in\mathbb{N}}}\frac{1}{|q^2\alpha-pq|}<\infty: \alpha\in\mathbb{R}\right\}$$ 
and the \emph{Markov spectrum} is defined by
$$M:=\left\{\sup\limits_{\substack{(p,q)\in\mathbb{Z}^2 \\
(p,q)\neq(0,0)}}\frac{1}{|ap^2+bpq+cq^2|}<\infty: ax^2+bxy+cy^2 \textrm{ real
indefinite, }b^2-4ac=1\right\}.$$  

The features of these sets were systematically by several authors since the
foundational works by Markov~\cite{Mar79},~\cite{Mar80} in 1879--1880, and we
refer the reader to the book of Cusick--Flahive~\cite{CF} for a nice account on the literature on this topic before 1989. 

The Lagrange and Markov spectra possess fascinating fractal structures between $3$ and the so-called \emph{Freiman's constant} $c_F=4.5278\dots$. In particular, the second author proved in \cite{Mor18} that 
$$\dim(L\cap(-\infty,t)) = \dim(M\cap(-\infty,t))$$ 
for all $t\in\mathbb{R}$ (where $\dim$ stands for the Hausdorff dimension) and, moreover, the \emph{dimension function} 
$$d(t):=\dim(L\cap(-\infty,t)) = \dim(M\cap(-\infty,t))$$ 
is a \emph{continuous} function such that $d(3+\varepsilon)>0$ and
$d(\sqrt{12})=1$. Furthermore, it was recently shown in~\cite{MMPV}
that\footnote{Throughout the manuscript all numbers are truncated and not rounded.} 
$$t_1:=\min\{t\in\mathbb{R}: d(t)=1\}=3.334384\dots$$ 

In this note, we investigate the graph of the dimension function $d(t)$. More
concretely, it is known that $L\subset M$ are closed subsets of the real line,
so that their complements are countable unions of open intervals consisting of
their gaps. By definition, the dimension function $d(t)$ is constant on the gaps
of $M$, that is, any such gap is included in a \emph{plateau} of $d(t)$. Hence,
we can \emph{rigorously} depict a portion of the graph of $d(t)$ by determining
the first few largest plateaux of $d(t)$ between $3$ and $t_1$.
\pvshort{The plot of the function~$d$ on the complement to the plateaux can be found in
Appendix~\ref{ap:plot}. }

For the smoothness of exposition, let us denote the two infinite
plateaux by $P_0 = (-\infty,3)$ and $P_1 = (t_1, + \infty)$. 
Our main result, Theorem~\ref{t.A} below, states that the ten largest (non-trivial)
plateaux $P_2, P_3,\dots, P_{11}$ of $d(t)$ are precisely the plateaux of
length~$0.005$. As a byproduct, we also identify two plateaux of
length~$0.004185$ and~$0.004003$ and this helps to reduce necessary numerical
computations. Interestingly enough, the numbers and sizes of the gaps of $M$ included in these plateaux are variable: for
instance,  
\begin{itemize} 
    \item the largest plateau $P_2$ is the closure of the union of the largest gap $G_1=(a_1,b_1)$ of $M$ before $t_1$ and another gap $J_1=(b_1,c_1)$ of $M$, where $b_1$ is an isolated point of $L$; 
    \item the \emph{second} largest plateau $P_3$ is the closure of the union of the \emph{third} largest gap $G_3=(a_3,b_3)$ of $M$ before $t_1$ and another gap $J_3=(b_3,c_3)$ of $M$, where $b_3$ is an isolated point of $L$; 
    \item the \emph{third} largest plateau $P_4$ is the closure of the union of the \emph{second} largest gap $G_2=(a_2,b_2)$ of $M$ before $t_1$ and another gap $J_2=(b_2,c_2)$ of $M$, where $b_2$ is an isolated point of $L$; 
    \item the seventh largest plateau $P_8$ is a \emph{single} gap of $M$. 
\end{itemize} 

\begin{minipage}{80mm}
\begin{remark} 
   In general, the structure of gaps of the spectra contained in small plateaux of $d(t)$ can be quite intricate: for instance, this is the case of any plateau of $d(t)$ containing a gap of $L$ including a Cantor set of points of $M\setminus L$. 
\end{remark}

 The main goal of this note is to rigorously justify the drawing of the
dimension function~$d(t)$ of the Lagrange and Markov spectra shown in
Figure~\ref{fig:functionf}. To this end, we first identify twelve plateaux. Then
we compute an approximation to the function on the complement to the plateaux using the same
method as was used in~\cite{MMPV} to compute the value of~$t_0$. It is rooted
in the approach introduced by Bumby in~\cite{B74} and relies on two Lemmas
that connect the value $d(t)$ to the dimension of a certain Gauss--Cantor sets.
\\
\indent The dimension of the arising Gauss--Cantor sets is computed using 
the method for computing the Hausdorff dimension originally developed in~\cite{PV20} 
and then improved in~\cite{MMPV}. 
\\
\indent We organise this paper as follows. In \S\ref{s.prelim}, we briefly review some
classical facts about the Lagrange and Markov spectra, including their dynamical
characterisations due to Perron. In \S\ref{s.plateaux}, we state and prove the
main result of this note, namely, Theorem~\ref{t.A}, about the first few
(non-trivial) plateaux of the dimension function $d(t)$.
\end{minipage}
\begin{minipage}{80mm}
   \includegraphics{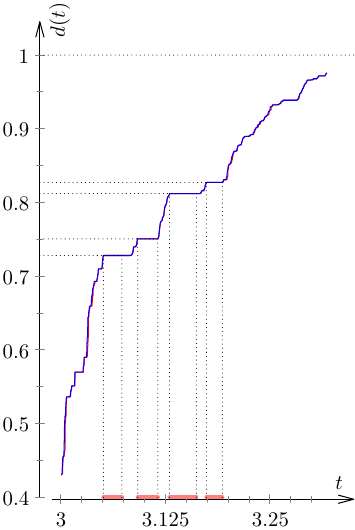}
    \captionof{figure}{A plot of the function~$d$. Pink intervals are gaps
     in the Markov spectrum identified in~\cite{CF}}
    \label{fig:functionf}
\end{minipage} 

Finally, in \S\ref{s.approx}, we discuss the numerical methods
yielding rigorous approximations to the graph of~$d(t)$. \pvshort{The numerical results
can be found in the Appendix. 
The code is available at the third author's github
\href{https://github.com/Polevita/Dimension\_Function}{https://github.com/Polevita/}.
}


\subsection*{Acknowledgments} 
The authors are very grateful to Prof. David Loeffler for providing
access to one of the computer servers that belong to the Number
Theory group at the University of Warwick. All computations have been carried
out on that machine. 

\section{Preliminaries}\label{s.prelim}

\subsection{Perron's characterisation of the classical spectra} Let $(\mathbb N^*)^\mathbb Z$ be the set of bi-infinite sequences of elements of $\mathbb{N}^*=\mathbb{N}\setminus\{0\}$. Given $\underline{\alpha} = (\alpha_n)_{n\in\mathbb{Z}} \in (\mathbb N^*)^\mathbb Z$ and $k\in\mathbb{Z}$, we set 
$$\lambda_k(\underline{\alpha}):=[\alpha_k;\alpha_{k+1},\alpha_{k+2},\ldots]+[0;\alpha_{k-1},\alpha_{k-2},\ldots],$$ 
where $[b_0;b_1,b_2,\ldots]$ stands for the continued fraction expansion 
$$[b_0;b_1,b_2,\ldots]:=b_0+\cfrac{1}{b_{1}+\cfrac{1}{b_{2}+\cfrac{1}{\ddots}}}.$$ 
Let $\sigma:(\mathbb N^*)^{\mathbb Z}\to (\mathbb N^*)^\mathbb Z$, $\sigma( (\alpha_n)_{n \in \mathbb
Z} ) = (\alpha_{n+1})_{n\in \mathbb Z}$, be the Bernoulli shift. The Lagrange value $\underline{\alpha}$ is the
limit superior of values of $\lambda_0$ along the $\sigma$-orbit of $\underline{\alpha}$, i.e.,  
$$
\ell(\underline{\alpha}) := \limsup_{n\to \infty} \lambda_0(\sigma^n \underline{\alpha})
=\limsup_{n\to \infty} \lambda_n(\underline{\alpha})
$$
and the Markov value of~$\underline{\alpha}$ is the supremum of values of $\lambda_0$ along
the $\sigma$-orbit of $\underline{\alpha}$, i.e.,  
$$
m(\underline{\alpha}) : =  \sup_{n\in \mathbb Z} \lambda_0(\sigma^n \underline{\alpha}) =
\sup_{n\in \mathbb Z}\lambda_n(\underline{\alpha}).
$$ 
In 1921, Perron~\cite{Per21} showed that the classical spectra are the
collections of (finite) Lagrange and Markov values: 
\begin{equation}
    \label{eq:LMdef}
    L := \left\{ \ell(\alpha)\in\mathbb{R} \mid \alpha \in
    (\mathbb N^*)^\mathbb Z \right\} \quad \mbox{ and } \quad
    M := \left\{  m(\alpha)\in\mathbb{R} \mid \alpha \in (\mathbb N^*)^\mathbb Z \right\}. 
\end{equation}

\subsection{Some known facts about the classical spectra} 
\label{ss.dimension-formula} 
Perron's description of $L$ and $M$ allows to check that $L\subset M$ are closed
subsets of $\mathbb{R}$. As it was shown by Markov (circa~$1880$), 
$$
L\cap(-\infty,3) = M\cap (-\infty, 3) = \{\sqrt{5}<\sqrt{8}<\sqrt{221}/5<\dots\}
$$
is an explicit sequence of quadratic irrationals converging to $3$. 
Also, Perron
showed in 1921 that $\sqrt{12}, \sqrt{13}\in L$,  

\begin{minipage}{90mm}
$$
L\cap(\sqrt{12},\sqrt{13})=M\cap(\sqrt{12},\sqrt{13}) = \varnothing, 
$$ 
and $m(\underline{\alpha})\leq\sqrt{12}$ if and only if
$\underline{\alpha}\in\{1,2\}^{\mathbb{Z}}$. Moreover, Hall~\cite{Hall47} showed
in 1947 that there is a smallest constant $c_F\leq 6$ such that
$L\cap[c_F,\infty)=M\cap[c_F,\infty) = [c_F,\infty)$, and Freiman~\cite{Fr75}
determined the exact value of $c_F$ in~$1975$. 

\pvlong{More recently, as we mentioned in \S\ref{s:intro}, Moreira proved in 2018 that 
$
d(t)=\dim(L\cap(-\infty,t)) = \dim(M\cap(-\infty,t))
$ 
is a continuous function of $t$ with $d(3+\varepsilon)>0$ for all
$\varepsilon>0$. Finally, in~\cite{EMGR22} it was shown that near~$3$ the
dimensions of Markov and Lagrange spectra have asymptotic
$$
d\left(3+\varepsilon\right)=2\cdot \frac{W(c|\log\varepsilon|)}{|\log\varepsilon|}
+O\left(\frac{\log|\log\varepsilon|}{|\log\varepsilon|^2}\right), 
$$
where~$W$ is the Lambert function and $c = \frac{1}{\log(3+\sqrt5)/2}$.
Figure on the right shows the asymptotic estimate (smooth solid black line) 
in comparison with the lower and upper bound on the dimension function~$d$. }
Furthermore, Moreira also proved in \cite{Mor18} the following useful dimension formula: 
$
d(t)=\min\{1, 2 \cdot D(t)\},
$ 
where $D(t)=\dim\bigl(\bigl\{[0;a_1,\dots, a_n,\dots] \mid
\exists a_{-j}\in{\mathbb N}^*, j\in\mathbb N, [a_k;a_{k+1},\dots]+[0;a_{k-1},a_{k-2},\dots]\leq t,
\,\,\forall\,k\in\mathbb{Z}\bigr\}\bigr)$. 
\end{minipage}%
\begin{minipage}{75mm}
   \includegraphics{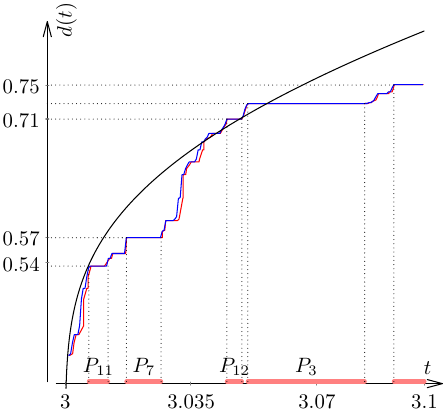}
    \captionof{figure}{A plot of the function~$d$ and its asymptotic near~$3$. Pink intervals are the
    plateaux identified in Theorem~\ref{t.A}.} 
    \label{fig:lambert}
\end{minipage}
\begin{remark}
It follows from the previous facts that function $d(t)$ is a Cantor staircase
function, i.e., it is a non-constant and non-decreasing continuous function which is locally
constant in an open and dense set. Indeed, $d(t)=1$ for every $t\ge t_1$ and,
for every positive integer $n$,
$d(t_1-1/n)=\dim(L\cap(-\infty,t_1-1/n))=d(t_1-1/n)<1$, and in particular
$L\cap(-\infty,t_1-1/n)$ has Lebesgue measure zero. This implies that $L\cap
(-\infty,t_1)=\cup_{n\ge 1}L\cap(-\infty,t_1-1/n)$ also has Lebesgue measure
zero, and therefore has empty interior. So, the set $L\cap (-\infty,t_1]$ is
closed, has Lebesgue measure zero and empty interior, and the function $d(t)$ is
constant in every connected component of the open, dense and full measure set
$\mathbb R\setminus L\cap (-\infty,t_1]$. 
\end{remark}
\subsection{A useful lemma about continued fractions} In \S\ref{s.plateaux} below, the following lemma will be used several times:

\begin{lemma}
    \label{l.plateaux} 
    Let $\beta$ be a finite word and $\theta$ be a symmetric finite word of even size on $\{1,2\}$. Assume that, under some conditions (e.g., after forbidding a finite list of finite strings), the Markov value of an infinite word of the type $\gamma=\omega_2^t \beta^t 2\theta 2\beta \omega_1$ is attained at one of the $2$ next to $\theta$, where $\omega_1, \omega_2$ are infinite words (on $\{1,2\}$). Then this Markov value is minimum when $\omega_1=\omega_2=:\omega$ and $[0;\beta,\omega]$ is minimum (under these conditions).
\end{lemma}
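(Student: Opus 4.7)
The plan is to compute the two Lagrange values at the $2$'s adjacent to $\theta$, exploit the palindrome hypothesis on $\theta$ to reduce the min--max problem over $(\omega_1,\omega_2)$ to the minimization of a single continued-fraction expression, and then use a Möbius-transformation argument to identify this minimization with that of $[0;\beta,\omega]$.

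First, reading $\gamma=\omega_2^t\beta^t\,2\,\theta\,2\,\beta\,\omega_1$ outward from each of the two $2$'s bordering $\theta$ (and using $\theta^t=\theta$ in the leftward reading from the right $2$), one computes
\[
\lambda_{\mathrm{L}} = 2+\phi(\omega_1)+\psi(\omega_2), \qquad \lambda_{\mathrm{R}} = 2+\psi(\omega_1)+\phi(\omega_2),
\]
where $\phi(\omega):=[0;\theta,2,\beta,\omega]$ and $\psi(\omega):=[0;\beta,\omega]$. Applying the identity $\max(a,b)=\tfrac{a+b}{2}+\tfrac{|a-b|}{2}$ with $F:=\phi+\psi$ and $G:=\phi-\psi$ gives
\[
\max(\lambda_{\mathrm{L}},\lambda_{\mathrm{R}}) = 2+\tfrac{1}{2}\bigl(F(\omega_1)+F(\omega_2)\bigr)+\tfrac{1}{2}\bigl|G(\omega_1)-G(\omega_2)\bigr|.
\]
For any admissible $\omega^*$ minimizing $F$, setting $\omega_1=\omega_2=\omega^*$ annihilates the absolute-value summand and makes the averaged summand equal to $F(\omega^*)$, so the minimum over $(\omega_1,\omega_2)$ equals $2+\min_\omega F(\omega)$ and is attained on the diagonal. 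The task is thus reduced to minimizing $F(\omega)$ over admissible $\omega$.

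Next, I would write $\phi$ as a Möbius function of $\psi$. Letting $P_k/Q_k$ denote the convergents of the finite CF $[0;\theta_1,\dots,\theta_\ell,2]$ (with $\ell=|\theta|$), standard continued-fraction identities yield
\[
\phi(\omega) = \frac{P_{\ell+1}+P_\ell\,\psi(\omega)}{Q_{\ell+1}+Q_\ell\,\psi(\omega)} =: \mu(\psi(\omega)), \qquad \mu'(x) = \frac{(-1)^{\ell+1}}{(Q_{\ell+1}+Q_\ell x)^2},
\]
where one uses $P_\ell Q_{\ell+1}-P_{\ell+1}Q_\ell=(-1)^{\ell+1}$. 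Since the last partial quotient of $[0;\theta,2]$ equals $2$ we have $Q_{\ell+1}\geq 2$, and $\psi\geq 0$, so $|\mu'(\psi)|\leq 1/4$. Consequently $\tfrac{dF}{d\psi}=1+\mu'(\psi)\geq 3/4>0$, meaning $F$ is strictly increasing in $\psi$. Therefore any admissible sequence $\omega^*$ minimizing $\psi(\omega)=[0;\beta,\omega]$ also minimizes $F$, and by the reduction above it minimizes the Markov value of $\gamma$, as claimed.

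The principal technical step is the Möbius representation $\phi=\mu\circ\psi$ together with the contraction bound $|\mu'|<1$; everything else amounts to algebraic rearrangement. I expect no significant obstacle beyond careful bookkeeping with convergents and verifying that the diagonal choice $\omega_1=\omega_2=\omega^*$ is compatible with whatever forbidden-string conditions were imposed on $\gamma$.
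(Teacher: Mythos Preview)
Your proof is correct and follows essentially the same approach as the paper's: both compute the two $\lambda$-values as $2+\psi(\omega_2)+\phi(\omega_1)$ and $2+\psi(\omega_1)+\phi(\omega_2)$, reduce the min--max to the diagonal $\omega_1=\omega_2$, and then use that $\psi\mapsto[0;\theta,2+\psi]$ is a contraction to conclude monotonicity in $\psi$. The only cosmetic difference is in the diagonal reduction---the paper argues by monotonicity of $h(x,y)$ in each variable and a WLOG case split, whereas you use the identity $\max=\tfrac12(\text{sum})+\tfrac12|\text{diff}|$; your explicit M\"obius derivative bound $|\mu'|\le 1/4$ is exactly the paper's one-line ``$f(y)=[0;\theta,2+y]$ is a contraction.''
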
 

\begin{proof} Let $x=[0;\beta,\omega_1]$ and $y=[0;\beta,\omega_2]$. If the positions with the $2$'s after and before $\theta$ are, respectively, $0$ and $2r+1$ (where $2r=|\theta|$), then $\lambda_0(\gamma)=2+y+[0;\theta,2+x]=:h(x,y)$ and $\lambda_{2r+1}(\gamma)=2+x+[0;\theta,2+y]=h(y,x)$. Assume without loss of generality (by symmetry) that the Markov value of $\gamma$ is attained at the position $0$. Since $h(x,y)$ is increasing in $y$ and decreasing in $x$, then we should have $y\ge x$ (indeed if $y<x$ then $\lambda_0(\gamma)=h(x,y)<h(x,x)<h(y,x)=\lambda_{2r+1}(\gamma)$, a contradiction). 
Consequently, in this case we have $h(x,y)\ge h(y,y)$, so the minimum value in this case is attained when $x=y$, and is equal to $h(y,y)=2+y+[0;\theta,2+y]$. Since $f(y)=[0;\theta,2+y]$ is a contraction, $h(y,y)$ is increasing, so the minimum is attained when $y=[0;\beta,\omega_2]=[0;\beta,\omega]$ is minimum. 
\end{proof} 

\subsection{A useful lemma about Hausdorff dimension} Let $K$ be a $C^2$ dynamical Cantor set, i.e., 
$$K=\bigcap\limits_{n\in\mathbb{N}}\psi^{-n}(I_1\cup\dots\cup I_r),$$ 
where $\psi:I_1\cup\dots\cup I_r\to \mathbb{R}$ is a topologically mixing,
piecewise expanding $C^2$ map whose domain is a disjoint union of compact
intervals forming a Markov partition for $\psi$. The following folklore lemma
(see, e.g., Lemma 2.5 of Lima--Moreira~\cite{LM}) says that the removal of an element of the construction of a dynamical Cantor set strictly decreases the Hausdorff dimension. 

\begin{lemma}\label{l.dimension-drop} Let $K$ be a dynamical Cantor set defined by $\psi:I_1\cup\dots\cup I_m\to\mathbb{R}$. Given $m\in\mathbb{N}$, let $\mathcal{P}^m$ be the collection of connected components of $\psi^{-m+1}(I_1\cup\dots\cup I_r)$. If $J\in\mathcal{P}^m$ is an interval of the construction of $K$ such that $\mathcal{P}^m\setminus\{J\}$ is a Markov partition with respect to $\psi$, then the dynamical Cantor set 
$$K'=\bigcap\limits_{n\in\mathbb{N}} \psi^{-nm}\left(\bigcup\limits_{I\in\mathcal{P}^m\setminus\{J\}}I\right)$$ 
has a Hausdorff dimension $\textrm{dim}(K') < \textrm{dim}(K)$. 
\end{lemma}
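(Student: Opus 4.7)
The plan is to deduce the strict dimension drop from Bowen's formula combined with the strict monotonicity of the topological pressure under passage to a proper invariant subsystem. Recall that for a topologically mixing piecewise expanding $C^2$ map the Hausdorff dimension of the repeller is the unique zero of the continuous and strictly decreasing pressure function $t\mapsto P(-t\log|\psi'|)$.

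First I would pass to the iterate $\psi^m$, noting that $\mathcal{P}^m$ is a Markov partition for $\psi^m$, that the repeller of $(\psi^m,\mathcal{P}^m)$ is still $K$, and that $K'$ is the repeller of the restriction of $\psi^m$ to the union of the intervals in $\mathcal{P}^m\setminus\{J\}$. Writing $\varphi:=-d\log|(\psi^m)'|$ for $d=\dim K$, Bowen's formula gives $P_K(\varphi)=0$. By strict monotonicity of the pressure in $t$, the strict dimension drop $\dim K'<\dim K$ will follow from the strict inequality
$$P_{K'}(\varphi)<0.$$

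To prove this pressure inequality I would invoke the variational principle together with uniqueness and full support of the equilibrium state. Since $\varphi$ is Hölder continuous and $(K,\psi^m)$ is topologically mixing, $\varphi$ admits a unique equilibrium state $\mu$ on $K$ and $\mu$ has full support in $K$; in particular $\mu(J)>0$. Any invariant probability measure $\nu$ on $K'$ satisfies $\nu(J)=0$ and therefore differs from $\mu$, so by the uniqueness clause of the variational principle
$$h(\nu)+\int\varphi\, d\nu \,<\, P_K(\varphi)\,=\,0.$$
Taking the supremum over such $\nu$, and using upper semicontinuity of the entropy (so that the supremum is attained in the piecewise expanding $C^2$ setting), we conclude $P_{K'}(\varphi)<0$.

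The main technical obstacle is that the subshift of finite type coding $K'$ need not be topologically mixing even though the one coding $K$ is, so Bowen's formula and the uniqueness of the equilibrium state do not apply to $K'$ as a single system. I would deal with this via the standard spectral decomposition: write $K'$ as a finite disjoint union of topologically transitive closed invariant pieces $K'_1,\ldots,K'_N$, apply Bowen's formula to each piece separately, and use $\dim K'=\max_i\dim K'_i$. Each $K'_i$ is a proper closed invariant subset of $K$, and the argument of the previous paragraph applied to each $K'_i$ in turn yields $\dim K'_i<\dim K$, whence the conclusion.
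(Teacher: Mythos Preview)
The paper does not actually prove this lemma: it is stated as a folklore fact with a reference to Lemma~2.5 of Lima--Moreira~\cite{LM}. Your argument via Bowen's formula, the variational principle, and the uniqueness and full support of the equilibrium state for the H\"older potential $-d\log|(\psi^m)'|$ is the standard thermodynamic-formalism proof of this kind of strict-drop statement, and it is correct. The only step that required care is the passage from ``$h(\nu)+\int\varphi\,d\nu<0$ for every $\psi^m$-invariant $\nu$ supported on $K'$'' to ``$P_{K'}(\varphi)<0$'', and you handled it properly by invoking upper semicontinuity of entropy (expansiveness) so that the supremum in the variational principle is attained; your final paragraph dealing with the possible failure of mixing for $K'$ via the spectral decomposition is likewise the right fix. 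In short, you have supplied a complete proof where the paper only gives a citation.
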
 

For our purposes, we shall apply this lemma to the so-called
\emph{Gauss--Cantor sets}, that is, the dynamical Cantor sets produced by the
restrictions of iterates of the Gauss map
$g([0;a_1,a_2,a_3,\dots]):=[0;a_2,a_3,\dots]$ to an appropriate finite
collection of compact subintervals of $(0,1)$. It is convenient to describe them
in terms of the so-called \emph{forbidden subsequences} of the
sequence~$(a_j)_{j\in \mathbb N}$.

\section{Plateaux of the function $d(t)$}\label{s.plateaux} 

The largest plateaux of the dimension function of the classical spectra are described by the following statement. 
\begin{theorem}~\label{t.A}
The ten largest plateaux of the dimension function~$d$ are precisely the
plateaux of length bigger than~$0.005$. 
\end{theorem}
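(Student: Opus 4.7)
The plan is to identify twelve candidate plateaux, verify they are indeed plateaux of $d(t)$ by analysing the associated Gauss--Cantor sets, and then use rigorous numerical approximations of $d$ on the complement to exclude the existence of any other plateau of length exceeding $0.005$.

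First I would locate the plateaux by exploiting the structure of gaps of the Markov spectrum $M$. For each candidate plateau $P_i=[a_i,c_i]$, I would exhibit a pair of words $\underline{\alpha}_i, \underline{\beta}_i \in \{1,2,\dots\}^{\mathbb Z}$ whose Markov values are $a_i$ and $c_i$ respectively, and such that no admissible sequence (under Perron's characterisation) realises a Markov value strictly between them. Using Lemma~\ref{l.plateaux} on the words built from $\beta$, $\theta$ and the auxiliary tails $\omega_1,\omega_2$, I would compute these endpoints exactly and verify the structural description of the introduction (single gap, two gaps joined at an isolated point of $L$, etc.). In the three cases where $P_i$ is the closure of $G_i \cup J_i$ glued at an isolated point $b_i \in L$, Perron's description of $L$ via $\limsup$ gives that the intermediate value $b_i$ is a single value of $L$, hence contributes nothing to the dimension and $d$ remains constant across it.

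Second, for each candidate plateau I would confirm constancy of $d$ by applying Moreira's formula $d(t)=\min\{1,2D(t)\}$ combined with Lemma~\ref{l.dimension-drop}. The point is that, as $t$ ranges over the interior of $P_i$, the set of admissible continued-fraction expansions $[0;a_1,a_2,\dots]$ defining $D(t)$ does not change — the additional sequences that would become admissible at $t=c_i$ correspond to adjoining an interval $J$ to a Markov partition, so by Lemma~\ref{l.dimension-drop} the dimension strictly drops when $J$ is removed. This both shows that $d$ is constant on $[a_i,c_i]$ and that $d(a_i-\varepsilon)<d(a_i)$ and $d(c_i+\varepsilon)>d(c_i)$, so each $P_i$ is a maximal plateau.

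Third, to prove that these are exactly the ten largest non-trivial plateaux, I would use the rigorous numerical method of \cite{PV20,MMPV} to produce certified upper and lower bounds for $d(t)$ at a sufficiently fine mesh of values $t\in [3,t_1]$ lying outside the twelve identified plateaux. The strategy is: on each complementary subinterval $I$, exhibit values $t_{-}<t_{+}$ in $I$ with certified bounds satisfying $d(t_{-})<d(t_{+})$, which forbids any plateau of length $|I|$ to sit there; refining $I$, one rules out plateaux of length $>0.005$ everywhere outside the twelve. The two plateaux of length $0.004185$ and $0.004003$ are identified along the way to shorten the numerical search (they carve the complement into smaller pieces on which the certified bounds suffice).

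The main obstacle will be the last step: the numerical certification must be fine enough in regions where $d(t)$ grows slowly, so that the width of any hypothetical undetected plateau is provably below $0.005$. This requires carefully choosing the Gauss--Cantor approximants (i.e., the forbidden-subsequences data) so that the \emph{rigorous} upper and lower bounds on $d(t)$ have a gap much smaller than the variation of $d$ across a window of length $0.005$, and then running the computation on the full interval $[3,t_1]$ minus the twelve identified plateaux. The plateaux near the left endpoint $3$ are the most delicate, since there $d$ is close to $0$ and behaves like the Lambert-$W$ asymptotic recalled in Section~\ref{s.prelim}, so the mesh must be refined accordingly.
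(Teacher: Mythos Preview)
Your proposal is correct and follows essentially the same approach as the paper: identify the twelve plateaux via the combinatorics of ``first appearances'' of forbidden words in $\{1,2\}^{\mathbb Z}$, use Lemma~\ref{l.plateaux} to pin down the exact endpoints and Lemma~\ref{l.dimension-drop} to show the plateaux are maximal, then run the rigorous numerical method of \cite{PV20,MMPV} at checkpoints on the complement to exclude any further plateau of length $>0.005$. One small inaccuracy: the numerically delicate regions are not especially concentrated near $t=3$ (the paper's certified bounds there are in fact very tight), but rather in the mid-range intervals such as $\intpx{5}{10}$ and $\intpx{10}{8}$ where the lower and upper bounds on $d$ separate slightly; the paper handles this with a uniform checkpoint spacing of about $0.0024$, which you would need to match.
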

The proof is constructive and consists of two parts. First, we identify the ten
largest plateaux and two extra auxiliary plateaux. Then, estimating~$d$
numerically, we show that there is no other plateaux longer than~$0.005$. In
order to specify the plateaux, we use Gauss--Cantor sets that we would like
to introduce now. \\ 
\indent Consider the sets~$FW_i$ of forbidden subsequences given in the table
below:

\smallskip

    \begin{tabular}{|cl||cl||cl|}
        \hline
        $i$ & $FW_i$  & $i$ & $FW_i$ & $i$ & $FW_i$ \\
        \hline
        $2$ & $121$ &                               $6$ & $1212$, $2121$ &         $10$ & $1212$, $2121$, $1112111$  \\
        $3$ & $121$, $212$ &                        $7$ & $121$, $212$, $111222$, $222111$ &    $11$ & $121$, $212$, $111222$, $222111$, $21112$  \\
        $4$ & $121$, $21222$, $22212$ &             $8$ & $1212$, $2121$, $211121112$ &         $12$ & $121$, $212$, $12221112$, $21112221$ \\
        $5$ & $1212$, $2121$, $12111$, $11121$ &    $9$ & $21212$, $111212$, $212111$ &         $13$ & $21212$, $111212$, $212111$, $12121122$, $22112121$ \\
        \hline
\end{tabular}\\ \\
Let us define the Gauss--Cantor sets of continued fractions by 
$$
K_i = \{[0;a_1,a_2,\dots]: (a_n)_{n\in\mathbb{N}}\in\{1,2\}^{\mathbb{N}}
\mbox{ doesn't contain a sequence from } FW_i \mbox{ as a subsequence} \},
\ i = 2,3, \ldots, 13;
$$ 
and let $D_i=\dim(K_i)$, $i=2,\dots, 13$, be the Hausdorff dimensions of 
these Gauss--Cantor sets.

\noindent We now can list the twelve (non-trivial) plateaux of $d(t)$ in
decreasing order of lengths, that we were able to identify.  
\begin{align*}
P_2&=d^{-1}(2D_2) = \left[\frac{4\sqrt{30}}7,\frac{16351+720\sqrt{30}}{6409}\right] = 
[3.129843\ldots, 3.166578\ldots]; \\
P_3&=d^{-1}(2D_3) = \left[\frac{4\sqrt{210}}{19},\frac{74613+4096\sqrt{210}}{43449}\right]=
[3.050816\ldots, 3.083377\ldots]; \\
P_4&=d^{-1}(2D_4) =
\left[\frac{58473-745\sqrt{210}}{15422},\frac{1341491634738+14336533\sqrt{210}}{430566005892}\right] =
[3.091487\dots,3.116129\dots];\\
P_5&=d^{-1}(2D_5) =
\left[\frac{15-\sqrt{30}}{3},\frac{332001+3760\sqrt{30}}{110409}\right] =
[3.174258\dots, 3.193538\dots]; \\ 
P_6&=d^{-1}(2D_6) = \left[\frac{4\sqrt{6}}{3},\frac{18879+800\sqrt{30}}{7089}\right] = 
[3.265986\dots,3.281249\dots];\\
P_7&=d^{-1}(2D_7) = \left[\frac{28\sqrt{213378}}{4287}, \frac{2497149255+2842763\sqrt{213378}}{1258910718}\right] = 
[3.017028\dots, 3.026666\dots];\\
P_8&=d^{-1}(2D_8) = \left[\frac{\sqrt{66045}}{79},\frac{190+4\sqrt{30}}{65}\right] = 
[3.253066\dots, 3.260136\dots];\\
P_9&=d^{-1}(2D_9) = \left[\frac{11+35\sqrt{87}}{102},\frac{56508+2716\sqrt{87}}{24687}\right] = 
[3.308414\dots,3.315152\dots];\\
P_{10}&=d^{-1}(2D_{10}) = \left[\frac{14775-599\sqrt{30}}{3570},\frac{9537392579+1230099\sqrt{30}}{2959418678}\right] = 
[3.219647\dots,3.225001\dots];\\
P_{11}& = d^{-1}(2D_{11}) = \left[\frac{10\sqrt{718341}}{2819},
\frac{14264157401+16294182 \sqrt{718341}}{9321104530}\right] =
[3.006562\dots, 3.011906\dots];\\
P_{12}& = d^{-1}(2D_{12})= \left[\frac{8\sqrt{1785}}{111},\frac{89042158285+148687392\sqrt{1785}}{31262231449} \right] =
[3.044991\dots, 3.049177\dots]; \\
P_{13}& = d^{-1}(2D_{13}) = \left[\frac{24\sqrt{35}}{43}, \frac{457878845-1713407 \sqrt{87}}{133663998}\right] = 
[3.301998\dots, 3.306030\dots].\\
\end{align*}

\begin{remark} 
    The numerical values of $D_i$ are specified in Appendix~\ref{ap:table}. 
    We would like to highlight that the plateaux do not occur in the order of
    increasing length.     
\end{remark}


The remainder of this section is entirely dedicated to the
proof of this theorem \emph{modulo} some rigorous Hausdorff dimension estimates explained in \S\ref{s.approx}. More precisely, each subsection of \S\ref{ss:plateaux-list} below discusses one of
the twelve plateaux above. Afterwards, in \S\ref{ss:inbetween}, we complete the
proof of this theorem (modulo some rigorous numerical computations explained in \S\ref{s.approx}) by showing that any non-trivial plateaux of $d(t)$
distinct from $P_i$, $2\leq i\leq 13$, has length $<0.005$. 

\subsection{The twelve plateaux}\label{ss:plateaux-list} 

\subsubsection{The largest plateau: first appearances of $121$} 

Cusick identified in~\cite{C74} a gap in Markov (and Lagrange) spectrum between
$[2; \overline{1,2,2,2}]+[0;
\overline{2,2,1,2}]=\frac{4\sqrt{30}}7=3.1298431857...$, the largest element of
$M$ corresponding to a sequence of $1$'s and $2$'s without the factor $121$
(indeed $[2; \overline{1,2,2,2}]$ is the largest value of a continued fraction
with coefficients $1$ and $2$ and integer part $2$ without the factor $121$ and
$[0; \overline{2,2,1,2}]$ is the largest value of a continued fraction with
coefficients $1$ and $2$ starting with $[0;2]$ without the factor $121$) and
$[2; \overline{1,1,2}]+[0;\overline{1,1,2}]=\sqrt{10}=3.16227766...$, the
smallest element of $M$ corresponding to a sequence with the factor $121$. The
left endpoint of this interval is also the left endpoint of the plateau, since
if the Markov value of an infinite sequence is strictly smaller than
$\frac{4\sqrt{30}}7$ then, for $n$ large enough, the sequence $(2212)^n$ is
forbidden, which decreases the Hausdorff dimension (the regular Cantor set of
continued fractions with coefficients $1$ and $2$ with the factors $121$ and
$(2212)^n$ forbidden has Hausdorff dimension strictly smaller than the regular
Cantor set where only $121$ is forbidden thanks to Lemma~\ref{l.dimension-drop}).  

On the other hand, the right endpoint of this gap is an isolated point in $M$
(and in $L$). Indeed, if the word $1212$ (or $2121$) is a factor of a sequence,
its Markov value is at least $[2;1,2,\overline{2,1}]+[0;1,\overline{1,2}]>3.28$.
If $12111$ appears as a factor of a sequence, its Markov value is at least
$[2;1,1,1,\overline{1,2}]+[0;1,\overline{1,2}]>3.189$. So, if the Markov value
is smaller than $3.189$ and there is a factor $121$ then it must have a
neighbourhood $2112112$. If the factors $21121122$ and $22112112$ are forbidden
then $121$ forces the neighbourhood $121121121$, which begins and ends by $121$,
and repeating the argument we see that the sequence is forced to be
$\overline{121}$, and the Markov value is the right endpoint of the gap. Let us
determine the smallest possible Markov value of a sequence containing
$2112^*1122$ as a factor (knowing that $1212$, $2121$, $12111$ and $11121$ are
forbidden). It should continue to the left as $22112112^*1122$, and so is
$y^{t}22112^{\#}112^*1122x$, where $x$ and $y$ are infinite sequences. If
$a=[0;1,1,2,2,x]$ and $b=[0;1,1,2,2,y]$, by Lemma \ref{l.plateaux} the Markov
value is minimum when $a=b$ are minimum (with the previous constraints), so when
$a=b=[0;1,1,2,2,\overline{2,1,2,2}]$, and the minimum value is
$[2;1,\overline{1,2,2,2}]+[0;1,1,2,1,\overline{1,2,2,2}]=\frac{16351+720\sqrt{30}}{6409}=3.166578626...$.
This is the right endpoint of the plateau since up to any value strictly larger
than this, we aggregate to the regular Cantor set of continued fractions with
coefficients $1$ and $2$ with the factor $121$ forbidden the finite word
$(2221)^n 121121 (1222)^n$, for $n$ large enough, which connects to any words of
the previous regular Cantor set on both sides, so increasing its Hausdorff
dimension (cf. Lemma~\ref{l.dimension-drop}).  

\subsubsection{The second plateau: first appearances of $212$} 

Cusick identified in \cite{C74} a gap in Markov (and Lagrange) spectrum between $[2; \overline{1,1,1,2,2,2}]+[0; \overline{2,2,1,1,1,2}]=\frac{4\sqrt{210}}{19}=3.050816157...$, the largest element of $M$ corresponding to a sequence of $1$'s and $2$'s without the factors $121$ and $212$ (indeed $[2; \overline{1,1,1,2,2,2}]$ is the largest value of a continued fraction with coefficients $1$ and $2$ and integer part $2$ without the factors $121$ and $212$ and $[0; \overline{2,2,1,1,1,2}]$ is the largest value of a continued fraction with coefficients $1$ and $2$ starting with $[0;2]$ without the factors $121$ and $212$) and $[2; \overline{1,2,2}]+[0;\overline{2,1,2}]=\frac{\sqrt{85}}3=3.07318148576...$, the smallest element of $M$ corresponding to a sequence with the factor $212$. This is the third largest gap before $t_1=3.334384...$, but will produce the second largest plateau, as we will see. The left endpoint of this interval is also the left endpoint of the plateau, since if the Markov value of an infinite sequence is strictly smaller than $\frac{4\sqrt{210}}{19}$ then, for $n$ large enough, the sequence $(221112)^n$ is forbidden, which decreases the Hausdorff dimension (the regular Cantor set of continued fractions with coefficients $1$ and $2$ with the factors $121$, $212$ and $(221112)^n$ forbidden has Hausdorff dimension strictly smaller than the regular Cantor set where only $121$ and $212$ are forbidden, cf. Lemma \ref{l.dimension-drop}). 

On the other hand, the right endpoint of this gap is an isolated point in $M$ (and in $L$). Indeed, if $21222$ appears as a factor of a sequence, its Markov value is at least $[2;1,2,\overline{2,1}]+[0;2,2,\overline{2,1}]>3.1156$. If $212211$ appears as a factor of a sequence and $121$ is forbidden then its Markov value is at least $[2;1,2,\overline{2,1,2,2}]+[0;2,1,1,\overline{1,2,2,2}]>3.0833$. So, if the Markov value is smaller than $3.0833$ and there is a factor $212$ then it must have a neighbourhood $212212212$, which begins and ends by $212$, and repeating the argument we see that the sequence is forced to be $\overline{212}$, and its Markov value is the right endpoint of the gap. Let us determine the smallest possible Markov value of a sequence containing $12212^*211$ as a factor (knowing that $121$, $21222$ and $22212$ are forbidden). It should continue to the left as $112212212^*211$, and so is $y^{t}1122^{\#}12212^*211x$, where $x$ and $y$ are infinite sequences (notice that factors $2212^{o}212$ are not responsible for the Markov value since $[2;1,2,2,\overline{2,1,2,2}]+[0;2,1,2,\overline{2,1,2,2}]<3.078$). If $a=[0;2,1,1,x]$ and $b=[0;2,1,1,y]$, by Lemma \ref{l.plateaux} the Markov value is minimum when $a=b$ are minimum (with the previous constraints), so when $a=b=[0;2,1,1,\overline{1,2,2,2,1,1}]$, and the minimum value is $[2;2,1,1,\overline{1,2,2,2,1,1}]+[0;1,2,2,1,2,2,1,1,\overline{1,2,2,2,1,1}]=\frac{74613+4096\sqrt{210}}{43449}=3.08337773...$. This is the right endpoint of the plateau since up to any value strictly larger than this, we aggregate to the regular Cantor set of continued fractions with coefficients $1$ and $2$ with the factors $121$ and $212$ forbidden the finite word $(222111)^n 22122122 (111222)^n$, for $n$ large enough, which connects to the previous regular Cantor set on both sides, so increasing its Hausdorff dimension (thanks to Lemma \ref{l.dimension-drop}). 

\subsubsection{The third plateau: first appearances of $21222$} 

Cusick identified in~\cite{C74} a gap in Markov (and Lagrange) spectrum between
$[2;1,2,2,\overline{1,1,2,2,2,1}]+[0;2,
\overline{1,1,2,2,2,1}]=\frac{58473-745\sqrt{210}}{15422}=3.09148776579...$, the
largest element of $M$ corresponding to a sequence of $1$'s and $2$'s without
the factors $121$ and $21222$ (indeed $[2;1,2,2,\overline{1,1,2,2,2,1}]$ is the
largest value of a continued fraction with coefficients $1$ and $2$ and integer
part $2$ without the factors $121$ and $21222$ and
$[0;2,\overline{1,1,2,2,2,1}]$ is the largest value of a continued fraction with
coefficients $1$ and $2$ starting with $[0;2]$ without the factors $121$ and
$21222$) and $[2;
\overline{1,2,2,1,2,2,2,2}]+[0;\overline{2,2,2,1,2,2,1,2}]=\frac{\sqrt{233285}}{155}=3.11610231739...$,
the smallest element of $M$ corresponding to a sequence with the factor $21222$.
This is the second largest gap before $t_1=3.334384...$, but produces the third
largest plateau, as we will see. The left endpoint of this interval is also the
left endpoint of the plateau, since if the Markov value of an infinite sequence
is strictly smaller than $\frac{58473-745\sqrt{210}}{15422}$ then, for $n$ large
enough, the sequence $(111222)^n 112212211(222111)^n$ is forbidden, which
decreases the Hausdorff dimension (the regular Cantor set of continued fractions
with coefficients $1$ and $2$ with the factors $121$, $21222$ and $(111222)^n
112212211(222111)^n$ forbidden has Hausdorff dimension strictly smaller than the
regular Cantor set where only $121$ and $21222$ are forbidden). 

On the other hand, the right endpoint of this gap is an isolated point in $M$
(and in $L$). Indeed, if $212221$ appears as a factor of a sequence, its Markov
value is at least $[2;1,2,\overline{2,1}]+[0;2,2,1,\overline{1,2}]>3.1216$. If
$2221222$ appears as a factor of a sequence and $121$ is forbidden then its
Markov value is at least
$[2;1,2,2,2,\overline{2,1,2,2}]+[0;2,2,\overline{2,1,2,2}]>3.1198$. If $2122222$
appears as a factor of a sequence and $121$ is forbidden then its Markov value
is at least $[2;1,2,\overline{2,1,2,2}]+[0;2,2,2,2,\overline{2,1,2,2}]>3.1173$.
If $11221222$ appears as a factor of a sequence and $121$ is forbidden then its
Markov value is at least
$[2;1,2,2,1,1,\overline{1,2,2,2}]+[0;2,2,\overline{2,1,2,2}]>3.117$. If
$21222211$ appears as a factor of a sequence and $121$ is forbidden then its
Markov value is at least
$[2;1,2,\overline{2,1,2,2}]+[0;2,2,2,1,1,\overline{1,2,2,2}]>3.1164$. If
$1221221222$ appears as a factor of a sequence and $121$ is forbidden then its
Markov value is at least
$[2;1,2,2,1,2,2,1,\overline{1,2,2,2}]+[0;2,2,\overline{2,1,2,2}]>3.11611$. So,
if the Markov value is smaller than $3.11611$ and there is a factor $21222$ then
it must have a neighbourhood $2122221221222212$, which begins by $21222$ and
ends by its transpose $22212$, and repeating the argument we see that the
sequence is forced to be $\overline{21222212}$, and its Markov value is the
right endpoint of the gap. Let us determine the smallest possible Markov value
of a sequence containing $12212212^*22$ as a factor (knowing that $121$,
$212221$, $2221222$, $2122222$, $11221222$, $21222211$ and their transposes are
forbidden). It should continue as $12212212^*22212212$, and if it continue as
$12212212^*2221221221$ then it is $x^{t}12212212^*222^{\#}1221221y$, where $x$
and $y$ are infinite sequences. By Lemma \ref{l.plateaux}, the minimum value is
attained when $[0;1,2,2,1,2,2,1,y]=[0;1,2,2,1,2,2,1,x]$ are minimum, so it
should correspond to
$\overline{122211}2212212^*222122122\overline{112221}=\overline{112221}12212212^*2221221221\overline{122211}$
(since $[0,\overline{1,2,2,2,1,1}]$ is the largest continued fraction in $[0,1)$
with coefficients $1, 2$ with $121$ and $122212$ forbidden), and the minimum
value is
\begin{multline*}
    [2;2,2,2,1,2,2,1,2,2,\overline{1,1,2,2,2,1}]+[0;1,2,2,1,2,2,\overline{1,1,2,2,2,1}] \\
    =\frac{1341491634738+14336533\sqrt{210}}{430566005892}=3.1161294028759882...
\end{multline*}
This is the right endpoint of the plateau since up to any value strictly larger
than this, we aggregate to the regular Cantor set of continued fractions with
coefficients $1$ and $2$ with the factors $121$, $212221$, $2221222$, $2122222$,
$11221222$, $11221222$, $1221221222$ and their transposes forbidden the finite
word 
$$(222111)^n 22211221221222212212211222 (111222)^n,$$ for $n$ large enough, which connects to the previous regular Cantor set on both sides, so increasing its Hausdorff dimension. 

\subsubsection{The fourth plateau: first appearances of $12111$} 

Cusick identified in \cite{C74} a gap in Markov (and Lagrange) spectrum between $[2;1,1,\overline{2,2,1,2}]+[0;1,1, \overline{2,2,1,2}]=\frac{15-\sqrt{30}}{3}=3.174258141649...$, the largest element of $M$ corresponding to a sequence of $1$'s and $2$'s without the factors $1212$, $12111$ and their transposes (indeed, after a factor $1,2$, $[0;1,1,\overline{2,2,1,2}]$ is the largest value of a continuation with coefficients $1$ and $2$ and integer part $0$ without the factors $1212$, $12111$ and their transposes) and $[2; \overline{1,1,1,1,2,1,1,2}]+[0;\overline{1,1,2,1,1,1,1,2}]=\frac{\sqrt{9797}}{31}=3.19289664252...$, the smallest element of $M$ corresponding to a sequence with the factor $12111$. This is the fourth largest gap before $t_1=3.334384...$, and produces the fourth largest plateau. The left endpoint of this interval is also the left endpoint of the plateau, since if the Markov value of an infinite sequence is strictly smaller than $\frac{15-\sqrt{30}}{3}$ then, for $n$ large enough, the sequence $(2221)^n 221121122(1222)^n$ is forbidden, which decreases the Hausdorff dimension (the regular Cantor set of continued fractions with coefficients $1$ and $2$ with the factors $1212$, $2121$, $12111$, $11121$ and $(2221)^n 221121122(1222)^n$ forbidden has Hausdorff dimension strictly smaller than the regular Cantor set where only $1212$, $2121$, $12111$ and $11121$ are forbidden). 

On the other hand, the right endpoint of this gap is an isolated point in $M$ (and in $L$). Indeed, if $1212$ appears as a factor of a sequence, its Markov value is at least $[2;1,2,\overline{2,1}]+[0;1,\overline{1,2}]>3.28$. If $121112$ appears as a factor of a sequence, its Markov value is at least $[2;1,1,1,2,\overline{2,1}]+[0;1,\overline{1,2}]>3.207$. If $1112111$ appears as a factor of a sequence, then its Markov value is at least $[2;1,1,1,\overline{1,2}]+[0;1,1,1,\overline{1,2}]>3.22$. If $1211111$ appears as a factor of a sequence and $1212$ and $2121$ are forbidden then its Markov value is at least $[2;1,1,1,1,1,\overline{1,2,1,1}]+[0;1,\overline{1,2,1,1}]>3.197$. If $22112111$ appears as a factor of a sequence and $1212$ and $2121$ are forbidden then its Markov value is at least $[2;1,1,1,\overline{1,2,1,1}]+[0;1,1,2,2,\overline{2,1,2,2}]>3.197$. If $2112112111$ appears as a factor of a sequence and $1212$ and $2121$ are forbidden then its Markov value is at least $[2;1,1,1,\overline{1,2,1,1}]+[0;1,1,2,1,1,2,\overline{2,1,2,2}]>3.1933$. If $12111122$ appears as a factor of a sequence and $1212$ and $2121$ are forbidden then its Markov value is at least $[2;1,1,1,1,2,2,\overline{2,1,2,2}]+[0;1,\overline{1,2,1,1}]>3.19303$. So, if the Markov value is smaller than $3.19303$ and there is a factor $12111$ then it must have a neighbourhood $111211211112112111$, which ends by $12111$ and begins by its transpose $11121$, and repeating the argument we see that the sequence is forced to be $\overline{12111121}$, and its Markov value is the right endpoint of the gap. Let us determine the smallest possible Markov value of a sequence containing $12^*111122$ as a factor (knowing that $1212$, $121112$, $1112111$, $1211111$, $22112111$, $2112112111$ and their transposes are forbidden). It should continue as $211112112^*111122$, and if it continue as $2211112112^*111122$ then it is $x^{t}2211112^{\#}112^*111122y$, where $x$ and $y$ are infinite sequences. By Lemma \ref{l.plateaux}, the minimum value is attained when $[0;1,1,1,1,2,2,y]=[0;1,1,1,1,2,2,x]$ are minimum, so it should correspond to $\overline{2212}2211112112^*111122\overline{2122}=\overline{2221}1112112^*111\overline{1222}$, and the minimum value is $[2;1,1,1,\overline{1,2,2,2}]+[0;1,1,2,1,1,1,\overline{1,2,2,2}]=\frac{332001+3760\sqrt{30}}{110409}=3.1935382818628...$. This is the right endpoint of the plateau since up to any value strictly larger than this, we aggregate to the regular Cantor set of continued fractions with coefficients $1$ and $2$ with the factors $1212$, $121112$, $1112111$, $1211111$, $22112111$, $2112112111$, $12111122$ and their transposes forbidden the finite word $(2221)^n 222111121121111222 (1222)^n$, for $n$ large enough, which connects to the previous regular Cantor set on both sides, so increasing its Hausdorff dimension.

\subsubsection{The fifth plateau: first appearances of $1212$} 

There is a gap in Markov (and Lagrange) spectrum between
$[2;\overline{1,1,1,2}]+[0;\overline{1,1,1,2}]=\frac{4\sqrt{6}}{3}=3.26598632371...$,
the largest element of $M$ corresponding to a sequence of $1$'s and $2$'s
without the factors $1212$, $2121$ (indeed, after a factor $1,2$,
$[0;\overline{1,1,1,2}]$ is the largest value of a continuation with
coefficients $1$ and $2$ and integer part $0$ without the factors $1212$ and
$2121$) and
$[2;\overline{1,2,2,1,2,1,1,2}]+[0;\overline{1,1,2,1,2,2,1,2}]=\frac{\sqrt{689}}{8}=3.2811011871...$,
the smallest element of $M$ corresponding to a sequence with the factor $1212$.
This is the fifth largest gap before $t_1=3.334384...$, and produces the fifth
largest plateau. The left endpoint of this interval is also the left endpoint of
the plateau, since if the Markov value of an infinite sequence is strictly
smaller than $\frac{4\sqrt{6}}{3}$ then, for $n$ large enough, the sequence
$(1112)^n 1112111 (2111)^n$ is forbidden, which decreases the Hausdorff
dimension (the regular Cantor set of continued fractions with coefficients $1$
and $2$ with the factors $1212$, $2121$ and $(1112)^n 1112111 (2111)^n$
forbidden has Hausdorff dimension strictly smaller than the regular Cantor set
where only $1212$ and $2121$ are forbidden). 

On the other hand, the right endpoint of this gap is an isolated point in $M$
(and in $L$). Indeed, if $21212$ appears as a factor of a sequence, its Markov
value is at least $[2;1,2,\overline{2,1}]+[0;1,2,\overline{1,2}]>3.4$. If
$12121$ appears as a factor of a sequence, then its Markov value is at least
$[2;1,2,1,\overline{1,2}]+[0;1,\overline{1,2}]>3.297$. If $212111$ appears as a
factor of a sequence then its Markov value is at least
$[2;1,2,\overline{2,1}]+[0;1,1,1,\overline{1,2}]>3.314$. If $121222$ appears as
a factor of a sequence then its Markov value is at least
$[2;1,\overline{1,2}]+[0;1,2,2,2,\overline{2,1}]>3.2843$. If $2121122$ appears
as a factor of a sequence then its Markov value is at least
$[2;1,2,\overline{2,1}]+[0;1,1,2,2,\overline{2,1}]>3.288$. If $21211211$ appears
as a factor of the sequence then its Markov value is at least
$[2;1,2,\overline{2,1}]+[0;1,1,2,1,1,\overline{1,2}]>3.2826$. If $1212211$
appears as a factor of a sequence then its Markov value is at least
$[2;1,\overline{1,2}]+[0;1,2,2,1,1,\overline{1,2}]>3.2814$. If $12122122$
appears as a factor of a sequence with $12121$, $21212$, $111212$, $212111$,
$222121$ and $121222$ forbidden, then its Markov value is at least
$[2;1,\overline{1,2,1,2,2,1}]+[0;1,2,2,1,2,2,\overline{2,1,2,2}]>3.28121$. So,
if the Markov value is smaller than $3.28121$ and there is a factor $1212$ then
it must have a neighbourhood $212112122121$, which begins and ends by $2121$,
and repeating the argument we see that the sequence is forced to be
$\overline{12122121}$, and its Markov value is the right endpoint of the gap.
Let us determine the smallest possible Markov value of a sequence containing
$12^*122122$ as a factor (knowing that $21212$, $12121$, $111212$, $2121122$,
$21211211$, $121222$ , $1212211$ and their transposes are forbidden). It should
continue as $212212112^*122122$, and if it continues as $2212212112^*122122$ it
is $x^{t}2212212^{\#}112^*122122y$, where $x$ and $y$ are infinite sequences. By
Lemma \ref{l.plateaux}, the minimum value is attained when
$[0;1,2,2,1,2,2,y]=[0;1,2,2,1,2,2,x]$ are minimum, so it should correspond to
$\overline{2212}2212212112^*122122\overline{2122}=\overline{2122}12112^*1\overline{2212}$,
and the minimum value is
$[2;1,\overline{2,2,1,2}]+[0;1,1,2,1,\overline{2,2,1,2}]=\frac{18879+800\sqrt{30}}{7089}=3.28124988856557...$.
This is the right endpoint of the plateau since up to any value strictly larger
than this, we aggregate to the regular Cantor set of continued fractions with
coefficients $1$ and $2$ with the factors $12121$, $21212$, $111212$, $2121122$,
$21211211$, $121222$, $1212211$ and their transposes forbidden the finite word
$(2221)^n 222122121121222 (1222)^n$, for $n$ large enough, which connects to the
previous regular Cantor set on both sides, so increasing its Hausdorff
dimension. 

\subsubsection{The sixth plateau: first appearances of $111222$} We have a gap in Markov (and Lagrange) spectrum between $[2;2,1,1,\overline{2,2,2,1,1,2,2,1,1,1,2,2,1,1}]+[0;1,1,1,\overline{2,2,1,1,2,2,2,1,1,2,2,1,1,1}]=\frac{28\sqrt{213378}}{4287}=3.017028188796...$, the largest element of $M$ corresponding to a sequence of $1$'s and $2$'s without the factors $121$, $212$, $111222$ and $222111$ (indeed, if the Markov value of a sequence with center $2112^*22$ is attained at $0$, then it is at most $[2;2,2,\overline{1,1,1,2,2,2}]+[0;1,1,2,2,\overline{1,1,1,2,2,2}]<3.007$; also, $[2;2,1,1,\overline{2,2,2,1,1,2,2,1,1,1,2,2,1,1}]$ is the largest value of a continued fraction with coefficients $1$ and $2$ starting with $[2;2,1]$ without the factors $121$, $212$, $111222$ and $222111$ and $[0;1,1,1,\overline{2,2,1,1,2,2,2,1,1,2,2,1,1,1}]$ is the largest value of a continued fraction with coefficients $1$ and $2$ starting with $[0;1,1]$ without the factors $121$, $212$, $111222$ and $222111$) and $[2; \overline{2,2,2,1,1,1,1,2}]+[0;\overline{1,1,1,1,2,2,2,2}]=\frac{\sqrt{229}}5=3.02654919...$, the smallest element of $M$ corresponding to a sequence with the factor $111222$. The left endpoint of this interval is also the left endpoint of the plateau, since if the Markov value of an infinite sequence is strictly smaller than $\frac{28\sqrt{213378}}{4287}$ then, for $n$ large enough, the sequence $(22211221112211)^n$ is forbidden, which decreases the Hausdorff dimension (the regular Cantor set of continued fractions with coefficients $1$ and $2$ with the factors $121$, $212$, $111222$, $222111$ and $(22211221112211)^n$ forbidden has Hausdorff dimension strictly smaller than the regular Cantor set where only $121$, $212$, $111222$ and $222111$ are forbidden). 

On the other hand, the right endpoint of this gap is an isolated point in $M$ (and in $L$). Indeed, if the word $1112221$ is a factor of a sequence, its Markov value is at least $[2;1,\overline{1,1,1,2,2,2}]+[0;2,2,1,1,\overline{2,2,2,1,1,1}]>3.0322$. If $2221112$ appears as a factor of a sequence, its Markov value is at least $[2;1,1,1,2,2,\overline{1,1,1,2,2,2}]+[0;\overline{2,2,2,1,1,1}]>3.0433$. If $22211111$ appears as a factor of a sequence, its Markov value is at least $[2;1,1,1,1,1,\overline{1,2,2,2,1,1}]+[0;\overline{2,2,2,1,1,1}]>3.0305$. If $22222111$ appears as a factor of a sequence, its Markov value is at least $$[2;1,1,1,\overline{1,2,2,2,1,1}]+[0;2,2,2,2,\overline{2,1,1,1,2,2}]>3.02742.$$ 
So, if the Markov value is smaller than $3.02742$ and there is a factor $222111$ then it must have a neighbourhood $112222111122$. If $2221111221$ appears as a factor of such a sequence, its Markov value is at least 
$$[2;1,1,1,1,2,2,1,\overline{1,2,2,2,1,1,2,2,1,1,1,2,2,1}]+[0;2,2,2,1,1,1,1,\overline{2,2,1,1,1,2,2,1,1,2,2,2,1,1}]>3.0266664.$$ If $2112222111$ appears as a factor of a sequence, its Markov value is at least 
$$[2;1,1,1,1,2,2,2,2,1,1,\overline{2,2,2,1,1,2,2,1,1,1,2,2,1,1}]+[0;2,2,2,1,1,2,\overline{2,1,1,1,2,2,1,1,2,2,2,1,1,2}]>3.0266663.$$ 
If the factors $2221111221$ and $2112222111$ (and their transposes) are
forbidden then $222111$ forces the neighbourhood $1111222211112222$, and
repeating the argument we see that the sequence is forced to be
$\overline{11112222}$, and the Markov value is the right endpoint of the gap.
Let us determine the smallest possible Markov value of a sequence containing
$2112222^*111$ as a factor (knowing that $121$, $212$, $1222111$, $2111222$,
$11111222$, $11122222$, $2221111221$ and their transposes are forbidden). It
should continue as $2112222^*11112^{\#}222112$, and so is
$y^{t}2112222^*11112^{\#}222112x$, where $x$ and $y$ are infinite sequences. If
$a=[0;2,2,2,1,1,2,x]$ and $b=[0;2,2,2,1,1,2,y]$, by Lemma \ref{l.plateaux} the
Markov value is minimum when $a=b$ are minimum (with the previous constraints),
so when $a=b=[0;2,2,2,1,1,2,\overline{2,1,1,1,2,2,1,1,2,2,2,1,1,2}]$, and the
minimum value is 
\begin{multline*}
    [2;\overline{2,2,2,1,1,2,2,1,1,1,2,2,1,1}]+[0;1,1,1,1,2,\overline{2,2,2,1,1,2,2,1,1,1,2,2,1,1}]
    \\     =\frac{2497149255+2842763 \sqrt{213378}}{1258910718} =3.026666336477...
\end{multline*}
This is the right endpoint of the plateau since up to any value strictly larger than this, we aggregate to the regular Cantor set of continued fractions with coefficients $1$ and $2$ with the factors $121$, $212$, $1222111$, $2111222$, $11111222$, $11122222$, $2221111221$, $2112222111$ and transposes forbidden the finite word $(22211221112211)^n 222211112222 (11221112211222)^n$, for $n$ large enough, which connects to the previous regular Cantor set on both sides, so increasing its Hausdorff dimension.

\subsubsection{The seventh plateau: first appearances of $211121112$} We have a gap in Markov (and Lagrange) spectrum between $[2;\overline{1,1,1,1,1,2,1,1,1,2}]+[0;\overline{1,1,1,2,1,1,1,1,1,2}]=\frac{\sqrt{66045}}{79}=3.25306604786...$, the largest element of $M$ corresponding to a sequence of $1$'s and $2$'s without the factors $1212$, $2121$ and $211121112$ (indeed, $[2;\overline{1,1,1,1,1,2,1,1,1,2}]$ is the largest value of a continued fraction with coefficients $1$ and $2$ starting with $[2;1,1,1,1]$ without the factors $1212$, $2121$ and $211121112$ and $[0;\overline{1,1,1,2,1,1,1,1,1,2}]$ is the largest value of a continued fraction with coefficients $1$ and $2$ preceded by $12$ and starting with $0$ without the factors $1212$, $2121$ and $211121112$) and $[2;1,1,1,2,\overline{2,1,2,2}]+[0;1,1,1,2,\overline{2,1,2,2}]=\frac{190+4\sqrt{30}}{65}=3.2601369584647...$, the smallest element of $M$ corresponding to a sequence with the factor $211121112$. The left endpoint of this interval is also the left endpoint of the plateau, since if the Markov value of an infinite sequence is strictly smaller than $\frac{\sqrt{66045}}{79}$ then, for $n$ large enough, the sequence $(2111211111)^n$ is forbidden, which decreases the Hausdorff dimension (the regular Cantor set of continued fractions with coefficients $1$ and $2$ with the factors $1212$, $2121$, $211121112$ and $(2111211111)^n$ forbidden has Hausdorff dimension strictly smaller than the regular Cantor set where only $1212$, $2121$ and $211121112$ are forbidden, cf. Lemma \ref{l.dimension-drop}). 

On the other hand, the right endpoint of this gap is $[2;1,1,1,2,\overline{2,1,2,2}]+[0;1,1,1,2,\overline{2,1,2,2}]$ since $[0;1,1,1,2,\overline{2,1,2,2}]$ is the smaller value of a continued fraction with coefficients $1$ and $2$ starting with $[0;1,1,1,2]$ without the factors $1212$ and $2121$. This is the right endpoint of the plateau since up to any value strictly larger than this, we aggregate to the regular Cantor set of continued fractions with coefficients $1$ and $2$ with the factors $1212$, $2121$ and $211121112$ forbidden the finite word $(2212)^n 211121112 (2122)^n$, for $n$ large enough, which connects to the previous regular Cantor set on both sides, so increasing its Hausdorff dimension (thanks to Lemma \ref{l.dimension-drop}).

\subsubsection{The eighth plateau: first appearances of $111212$} We have a gap in Markov (and Lagrange) spectrum between $[2;\overline{1,1,2,2,1,2}]+[0;\overline{1,2,1,1,2,2}]=\frac{11+35\sqrt{87}}{102}=3.30841438096...$, the largest element of $M$ corresponding to a sequence of $1$'s and $2$'s without the factors $21212$, $111212$ and $212111$ (indeed, $[2;\overline{1,1,2,2,1,2}]$ is the largest value of a continued fraction with coefficients $1$ and $2$ starting with $[2;1,1,2]$ without the factors $21212$, $212111$and $111212$ and $[0;\overline{1,2,1,1,2,2}]$ is the largest value of a continued fraction with coefficients $1$ and $2$ starting with $[0;1,2,1,1]$ without the factors $21212$, $111212$ and $212111$) and $[2;\overline{1,2,2,1,2,1,1,1,1,2}]+[0;\overline{1,1,1,1,2,1,2,2,1,2}]=\frac{\sqrt{33245}}{55}=3.31512935564...$, the smallest element of $M$ corresponding to a sequence with the factor $111212$. The left endpoint of this interval is also the left endpoint of the plateau, since if the Markov value of an infinite sequence is strictly smaller than $\frac{11+35\sqrt{87}}{102}$ then, for $n$ large enough, the sequence $(112122)^n 1121211 (221211)^n$ is forbidden, which decreases the Hausdorff dimension (the regular Cantor set of continued fractions with coefficients $1$ and $2$ with the factors $21212$, $212111$, $111212$ and $(112122)^n 1121211 (221211)^n$ forbidden has Hausdorff dimension strictly smaller than the regular Cantor set where only $21212$, $212111$ and $111212$ are forbidden). 

On the other hand, the right endpoint of this gap is an isolated point in $M$ (and in $L$). Indeed, if the word $2121112$ is a factor of a sequence, its Markov value is at least $[2;1,1,1,2,\overline{2,1}]+[0;1,2,\overline{2,1}]>3.3329$. If $21211111$ appears as a factor of a sequence, its Markov value is at least $[2;1,1,1,1,1,\overline{1,2}]+[0;1,2,\overline{2,1}]>3.32$. If $1212111$ appears as a factor of a sequence, its Markov value is at least $[2;1,1,1,\overline{1,2}]+[0;1,2,1,\overline{1,2}]>3.33$. If $22212111$ appears as a factor of a sequence, its Markov value is at least $[2;1,1,1,\overline{1,2}]+[0;1,2,2,2,\overline{2,1}]>3.3189$. If $212111122$ appears as a factor of a sequence, its Markov value is at least $[2;1,1,1,1,2,2,\overline{2,1,2,1,1,1}]+[0;1,2,\overline{2,1,2,1,1,1}]>3.3162$. If $112212111$ appears as a factor of a sequence, its Markov value is at least $[2;1,1,1,\overline{1,2,1,2,1,1}]+[0;1,2,2,1,1,\overline{1,2,1,2,1,1}]>3.3161$. If $2212212111$ appears as a factor of a sequence, its Markov value is at least $[2;1,1,1,1,2,1,2,2,1,\overline{1,2,1,2,1,1}]+[0;1,2,2,1,2,2,\overline{2,1,2,1,1,1}]>3.31523$. If $2121111211$ appears as a factor of a sequence, its Markov value is at least $[2;1,1,1,1,2,1,1,\overline{1,2,1,1,1,2}]+[0;1,2,\overline{2,1,2,1,1,1}]>3.31537$. If $211212212111$ appears as a factor of a sequence, its Markov value is at least $[2;1,1,1,1,2,1,2,2,1,\overline{1,2,1,2,1,1,1,1}]+[0;1,2,2,1,2,1,1,2,\overline{2,1,2,1,1,1,1,1}]>3.315145$. So, if the Markov value is smaller than $3.315145$ and there is a factor $212111$ then it must have a neighbourhood $1112122121111212$, and repeating the argument we see that the sequence is forced to be $\overline{2122121111}$, and the Markov value is the right endpoint of the gap. Let us determine the smallest possible Markov value of a sequence containing $211212212^*111$ as a factor (knowing that $21212$, $2121112$, $21211111$, $1212111$, $22212111$, $212111122$, $112212111$, $2212212111$, $2121111211$ and their transposes are forbidden). It should continue as $211212212^*11112^{\#}122121$, and so is $y^{t}211212212^*11112^{\#}12212112x$, where $x$ and $y$ are infinite sequences. If $a=[0;1,2,2,1,2,1,1,2,x]$ and $b=[0;1,2,2,2,1,2,1,1,2,y]$, by  Lemma \ref{l.plateaux} the Markov value is minimum when $a=b$ are minimum (with the previous constraints), so when $a=b=[0;1,2,2,1,2,1,1,2,\overline{2,1,2,1,1,2}]$, and the minimum value is $[2;\overline{1,2,2,1,2,1}]+[0;1,1,1,1,2,\overline{1,2,2,1,2,1}]=\frac{56508+2716\sqrt{87}}{24687}=3.3151521654...$. This is the right endpoint of the plateau since up to any value strictly larger than this, we aggregate to the regular Cantor set of continued fractions with coefficients $1$ and $2$ with the factors $21212$, $2121112$, $21211111$, $1212111$, $22212111$, $212111122$, $112212111$, $2212212111$, $2121111211$, $211212212111$ and transposes forbidden the finite word $(221121)^n 221211112122 (121122)^n$, for $n$ large enough, which connects to the previous regular Cantor set on both sides, so increasing its Hausdorff dimension.
\vskip.1in

\subsubsection{The ninth plateau: first appearances of $1112111$} We have a gap
in Markov (and Lagrange) spectrum between
$[2;1,1,1,2,1,1,2,2,\overline{1,2,2,2}]+[0;1,1,2,2,\overline{1,2,2,2}]=\frac{14775-599\sqrt{30}}{3570}=3.21964758558657...$,
the largest element of $M$ corresponding to a sequence of $1$'s and $2$'s
without the factors $1212$, $2121$ and $1112111$ (indeed
$[2;1,1,1,2,1,1,2,2,\overline{1,2,2,2}]$ is the largest value of a continued
fraction with coefficients $1$ and $2$ starting with $[2;1,1]$ without the
factors $1212$, $2121$ and $1112111$ and $[0;1,1,2,2,\overline{1,2,2,2}]$ is the
largest value of a continued fraction with coefficients $1$ and $2$ starting
with $[0;1,1,2]$ without the factors $1212$, $2121$ and $1112111$) and
$[2;\overline{1,1,1,1,2}]+[0;\overline{1,1,1,1,2}]=\frac{2\sqrt{65}}5=3.2249030993...$,
the smallest element of $M$ corresponding to a sequence with the factor
$1112111$. The left endpoint of this interval is also the left endpoint of the
plateau, since if the Markov value of an infinite sequence is strictly smaller
than $\frac{14775-599\sqrt{30}}{3570}$ then, for $n$ large enough, the sequence
$(2221)^n 2211211121122(1222)^n$ is forbidden, which decreases the Hausdorff
dimension (the regular Cantor set of continued fractions with coefficients $1$
and $2$ with the factors $1212$, $2121$, $1112111$ and $(2221)^n
2211211121122(1222)^n$ forbidden has Hausdorff dimension strictly smaller than
the regular Cantor set where only $1212$, $2121$ and $1112111$ are forbidden). 

On the other hand, the right endpoint of this gap is an isolated point in $M$ (and in $L$). Indeed, if the word $11121112$ is a factor of a sequence, its Markov value is at least $[2;1,1,1,2,\overline{2,1,2,2}]+[0;1,1,1,\overline{1,2,1,1}]>3.24$. If $111112111$ appears as a factor of a sequence, its Markov value is at least $[2;1,1,1,\overline{1,2,1,1}]+[0;1,1,1,1,1,\overline{1,2,1,1}]>3.229$. If $2211112111$ appears as a factor of a sequence, its Markov value is at least $[2;1,1,1,\overline{1,2,1,1}]+[0;1,1,1,1,2,2,\overline{2,1,2,2}]>3.2256$. If $211211112111$ appears as a factor of a sequence, its Markov value is at least 
$$[2;1,1,1,\overline{1,2,1,1}]+[0;1,1,1,1,2,1,1,2,\overline{2,1,2,2}]>3.22492.$$
So, if the Markov value is smaller than $3.22492$ and there is a factor
$1112111$ then it must have a neighbourhood $11121111211112111$, which begins
and ends by $1112111$, and repeating the argument we see that the sequence is
forced to be $\overline{21111}$, and the Markov value is the right endpoint of
the gap. Let us determine the smallest possible Markov value of a sequence
containing $211211112^*111$ as a factor (knowing that $11121112$, $111112111$,
$2211112111$ and their transpose are forbidden). It should continue as
$211211112^*1111211112112$, and so is $y^{t}211211112^{*}11112^{\#}11112112x$,
where $x$ and $y$ are infinite sequences. If $a=[0;1,1,1,1,2,1,1,2,x]$ and
$b=[0;1,1,1,1,2,1,1,2,y]$, by Lemma \ref{l.plateaux} the Markov value is minimum
when $a=b$ are minimum (with the previous constraints), so when
$a=b=[0;1,1,1,1,2,1,1,2,\overline{2,1,2,2}]$, and the minimum value is
\begin{multline*}
[2;1,1,1,1,2,1,1,2,\overline{2,1,2,2}]+[0;1,1,1,1,2,1,1,1,1,2,1,1,2,\overline{2,1,2,2}]\\
=\frac{9537392579+1230099\sqrt{30}}{2959418678}=3.22500164632....
\end{multline*}
This is the right endpoint of the plateau since up to any value strictly larger
than this, we aggregate to the regular Cantor set of continued fractions with
coefficients $1$ and $2$ with the factors $11121112$, $111112111$, $2211112111$,
$211211112111$ and their transpose forbidden the finite word $(2221)^n
221121111211112111121122(1222)^n$, for $n$ large enough, which connects to the
previous regular Cantor set on both sides, so increasing its Hausdorff
dimension. 

\subsubsection{The tenth plateau: first appearances of $21112$} We have a gap in Markov (and Lagrange) spectrum between $[2;\overline{1,1,1,1,1,2,2,1,1,2,2,2,1,1,2,2}]+[0;2,\overline{1,1,2,2,2,1,1,2,2,1,1,1,1,1,2,2}]=\frac{10\sqrt{718341}}{2819}=3.006562605623...$, the largest element of $M$ corresponding to a sequence of $1$'s and $2$'s without the factors $121$, $212$, $111222$, $222111$ and $21112$ (indeed, if the Markov value of a sequence with center $222^*112$ without the factors $121$, $212$, $111222$, $222111$ is attained at $0$, then it is at most $[2;2,2,1,1,2,2,\overline{1,1,1,2,2,2}]+[0;1,1,2,2,\overline{1,1,1,2,2,2}]<3.0059$; also, $[2;\overline{1,1,1,1,1,2,2,1,1,2,2,2,1,1,2,2}]$ is the largest value of a continued fraction with coefficients $1$ and $2$ with integer part $2$ without the factors $121$, $212$, $111222$, $222111$ and $21112$ and $[0;2,\overline{1,1,2,2,2,1,1,2,2,1,1,1,1,1,2,2}]$ is the largest value of a continued fraction with coefficients $1$ and $2$ starting with $[0;2,1]$ without the factors $121$, $212$, $111222$, $222111$ and $21112$) and $[2;\overline{1,1,1,2,2}]+[0;\overline{2,1,1,1,2}]=\frac{\sqrt{145}}4=3.010398644698...$, the smallest element of $M$ corresponding to a sequence with the factor $21112$. The left endpoint of this interval is also the left endpoint of the plateau, since if the Markov value of an infinite sequence is strictly smaller than $\frac{10\sqrt{718341}}{2819}$ then, for $n$ large enough, the sequence $(1111122112221122)^n$ is forbidden, which decreases the Hausdorff dimension (the regular Cantor set of continued fractions with coefficients $1$ and $2$ with the factors $121$, $212$, $111222$, $222111$, $21112$ and $(1111122112221122)^n$ forbidden has Hausdorff dimension strictly smaller than the regular Cantor set where only $121$, $212$, $111222$, $222111$ and $21112$ are forbidden, cf. Lemma \ref{l.dimension-drop}). 

On the other hand, the right endpoint of this gap is an isolated point in $M$ (and in $L$). Indeed, if the word $211221112$ is a factor of a sequence, its Markov value is at least $[2;1,1,1,2,\overline{2,1,1,1,2,2}]+[0;2,1,1,2,\overline{2,1,1,1,2,2}]>3.016$. If $1111221112$ appears as a factor of a sequence, its Markov value is at least $$[2;1,1,1,2,\overline{2,1,1,1,2,2}]+[0;2,1,1,1,1,1,\overline{2,2,2,1,1,1}]>3.0118.$$ So, if the Markov value is smaller than $3.0118$ and there is a factor $21112$ then it must have a neighbourhood $211122111221112$, which begins and ends by $21112$, and repeating the argument we see that the sequence is forced to be $\overline{21112}$, and the Markov value is the right endpoint of the gap. Let us determine the smallest possible Markov value of a sequence containing $111122^*1112$ as a factor (knowing that $121$, $212$, $222111$, $211221112$ and their transpose are forbidden). It should continue as $111122^*11122111221111$, and so is $y^{t}111122^*111221112^{\#}21111x$, where $x$ and $y$ are infinite sequences. If $a=[0;2,1,1,1,1,x]$ and $b=[0;2,1,1,1,1,y]$, by Lemma \ref{l.plateaux} the Markov value is minimum when $a=b$ are minimum (with the previous constraints), so when $a=b=[0;2,\overline{1,1,1,1,1,2,2,1,1,2,2,2,1,1,2,2}]$, and the minimum value is $[2;2,\overline{1,1,1,1,1,2,2,1,1,2,2,2,1,1,2,2}]+[0;1,1,1,2,2,1,1,1,2,2,\overline{1,1,1,1,1,2,2,1,1,2,2,2,1,1,2,2}]=\frac{14264157401+16294182 \sqrt{718341}}{9321104530}=3.011906071937...$. This is the right endpoint of the plateau since up to any value strictly larger than this, we aggregate to the regular Cantor set of continued fractions with coefficients $1$ and $2$ with the factors $121$, $212$, $222111$, $211221112$, $1111221112$ and their transpose forbidden the finite word 
$$(1111122112221122)^n 1111122111221112211111(2211222112211111)^n,$$ for $n$ large enough, which connects to the previous regular Cantor set, so increasing its Hausdorff dimension.

\subsubsection{The eleventh plateau: first appearances of $21112221$} We have a gap in Markov (and Lagrange) spectrum between $[2;\overline{1,1,1,2,2,2,2,2}]+[0;\overline{2,2,2,2,1,1,1,2}]=\frac{8\sqrt{1785}}{111}=3.0449917368328...$, the largest element of $M$ corresponding to a sequence of $1$'s and $2$'s without the factors $121$, $212$, $12221112$ and $21112221$ (indeed, $[2;\overline{1,1,1,2,2,2,2,2}]$ is the largest value of a continued fraction with coefficients $1$ and $2$ without the factors $121$, $212$, $12221112$ and $21112221$ and $[0;\overline{2,2,2,2,1,1,1,2}]$ is the largest value of a continued fraction with coefficients $1$ and $2$ starting with $[0;2,2,2]$ without the factors $121$, $212$, $12221112$ and $21112221$), and 
$$[2;\overline{2,2,1,1,2,2,2,1,1,1,2,2,1,1,1,2}]+[0;\overline{1,1,1,2,2,1,1,1,2,2,2,1,1,2,2,2}]=\frac{5\sqrt{2059061}}{2353}=3.0491773395772939...,$$ the smallest element of $M$ corresponding to a sequence with the factor $21112221$. The left endpoint of this interval is also the left endpoint of the plateau, since if the Markov value of an infinite sequence is strictly smaller than $\frac{8\sqrt{1785}}{111}$ then, for $n$ large enough, the sequence $(22222111)^n 22(11122222)^n$ is forbidden, which decreases the Hausdorff dimension (the regular Cantor set of continued fractions with coefficients $1$ and $2$ with the factors $121$, $212$, $12221112$, $21112221$ and $(22222111)^n 22(11122222)^n$ forbidden has Hausdorff dimension strictly smaller than the regular Cantor set where only $121$, $212$, $12221112$ and $21112221$ are forbidden). 

On the other hand, the right endpoint of this gap is an isolated point in $M$ (and in $L$). Indeed, if $1222111222$ appears as a factor of a sequence, its Markov value is at least $[2;1,1,1,2,2,2,2,\overline{1,1,1,2,2,2}]+[0;2,2,1,1,\overline{2,2,2,1,1,1}]>3.0495$. If $1112221112$ appears as a factor of a sequence, then its Markov value is at least $[2;1,1,1,2,2,\overline{1,1,1,2,2,2}]+[0;2,2,1,1,1,1,\overline{2,2,2,1,1,1}]>3.0498$. If $122211122112$ appears as a factor of a sequence, its Markov value is at least $[2;1,1,1,2,2,1,1,2,2,\overline{1,1,1,2,2,2}]+[0;2,2,1,1,\overline{2,2,2,1,1,1}]>3.0492$. If $122112221112$ appears as a factor of a sequence, then its Markov value is at least $[2;1,1,1,2,2,\overline{1,1,1,2,2,2}]+[0;2,2,1,1,2,2,1,1,\overline{2,2,2,1,1,1}]>3.0492$. If $1222111221111$ appears as a factor of a sequence, its Markov value is at least $[2;1,1,1,2,2,1,1,\overline{1,1,1,2,2,2}]+[0;2,2,1,1,\overline{2,2,2,1,1,1}]>3.04919$. If $2222112221112$ appears as a factor of a sequence, then its Markov value is at least $[2;1,1,1,2,2,\overline{1,1,1,2,2,2}]+[0;2,2,1,1,2,2,\overline{2,2,2,1,1,1}]>3.04918$. If $122211122111221$ appears as a factor of a sequence, its Markov value is at least $$[2;1,1,1,2,2,1,1,1,2,2,1,1,2,2,2,\overline{1,1,1,2,2,2,2,2}]+[0;2,2,1,1,2,2,2,\overline{1,1,1,2,2,2,2,2}]>3.0491779.$$ If $211222112221112$ appears as a factor of a sequence, then its Markov value is at least $[2;1,1,1,2,2,1,1,1,\overline{2,2,2,1,1,1,1,1}]+[0;2,2,1,1,2,2,2,1,1,2,2,\overline{1,1,1,2,2,2}]>3.049178$. If $1111222112221112$ appears as a factor of a sequence, then its Markov value is at least $[2;1,1,1,2,2,1,1,1,\overline{2,2,2,1,1,1,1,1}]+[0;2,2,1,1,2,2,2,1,1,1,1,1,2,2,2,\overline{1,1,1,2,2,2,2,2}]>3.0491775$. If $1222111221112222$ appears as a factor of a sequence, its Markov value is at least $$[2;1,1,1,2,2,1,1,1,2,2,2,2,2,\overline{1,1,1,2,2,2,2,2}]+[0;2,2,1,1,2,2,2,\overline{1,1,1,2,2,2,2,2}]>3.0491774.$$ So, if the Markov value is smaller than $3.0491774$ and there is a factor $21112221$ then it must have a neighbourhood $211122211222111221112221$, which begins and ends by $21112221$, and repeating the argument we see that the sequence is forced to be $\overline{2111222112221112}$, and its Markov value is the right endpoint of the gap. Let us determine the smallest possible Markov value of a sequence containing $1222^*111221112222$ as a factor (knowing that $212$, $121$, $1222111222$, $1112221112$, $122211122112$, $122112221112$, $1222111221111$, $2222112221112$, $122211122111221$, $211222112221112$, $1111222112221112$ and their transposes are forbidden). It should continue as $2221112211122211222^*111221112222$, and if it continues as $22221112211122211222^*111221112222$ it is $x^{t}2222111221112^{\#}2211222^*111221112222y$, where $x$ and $y$ are infinite sequences. By Lemma \ref{l.plateaux}, the minimum value is attained when $[0;1,1,1,2,2,1,1,1,2,2,2,2,y]=[0;1,1,1,2,2,1,1,1,2,2,2,2,x]$ are minimum, so it should correspond to 
$$\overline{22222111}222221112211122211222^*1112211122222\overline{11122222},$$ and the minimum value is 
\begin{eqnarray*}[2;1,1,1,2,2,\overline{1,1,1,2,2,2,2,2}]+[0;2,2,1,1,2,2,2,1,1,1,2,2,\overline{1,1,1,2,2,2,2,2}]&=&\frac{89042158285+148687392\sqrt{1785}}{31262231449} \\ &=&3.049177432380786...
\end{eqnarray*} 
This is the right endpoint of the plateau since up to any value strictly larger than this, we aggregate to the regular Cantor set of continued fractions with coefficients $1$ and $2$ with the factors $212$, $121$, $1222111222$, $1112221112$, $122211122112$, $122112221112$, $1222111221111$, $2222112221112$, $122211122111221$, $211222112221112$, $1111222112221112$, $1222111221112222$ and their transposes forbidden the finite word $(22222111)^n 221112221122211122 (11122222)^n$, for $n$ large enough, which connects to the previous regular Cantor set on both sides, so increasing its Hausdorff dimension. 

\subsubsection{The twelfth plateau: first appearances of $12121122$} We have a gap in Markov (and Lagrange) spectrum between $[2;\overline{1,1,2,1,1,2,1,2}]+[0;\overline{1,2,1,1,2,1,1,2}]=\frac{24\sqrt{35}}{43}=3.3019980184742...$, the largest element of $M$ corresponding to a sequence of $1$'s and $2$'s without the factors $21212$, $111212$, $212111$, $12121122$ and $22112121$ (indeed, $[2;\overline{1,1,2,1,1,2,1,2}]$ is the largest value of a continued fraction with coefficients $1$ and $2$ starting with $[2;1,1,2,1]$ without the factors $21212$, $111212$, $212111$, $12121122$ and $22112121$ and $[0;\overline{1,2,1,1,2,1,1,2}]$ is the largest value of a continued fraction with coefficients $1$ and $2$ starting with $[0;1,2,1,1,2,1]$ without the factors $21212$, $111212$, $212111$, $12121122$ and $22112121$) and $[2;1,2,1,1,2,1,2,1,1,2,2,\overline{2,1,2,1,1,2}]+[0;1,1,2,2,\overline{2,1,2,1,1,2}]=\frac{457878845-1713407 \sqrt{87}}{133663998}=3.306030457347...$, the smallest element of $M$ corresponding to a sequence with the factor $12121122$. The left endpoint of this interval is also the left endpoint of the plateau, since if the Markov value of an infinite sequence is strictly smaller than $\frac{24\sqrt{35}}{43}$ then, for $n$ large enough, the sequence $(11211212)^n$ is forbidden, which decreases the Hausdorff dimension (the regular Cantor set of continued fractions with coefficients $1$ and $2$ with the factors $21212$, $111212$, $212111$, $12121122$, $22112121$ and $(11211212)^n$ forbidden has Hausdorff dimension strictly smaller than the regular Cantor set where only $21212$, $111212$, $212111$, $12121122$ and $22112121$ are forbidden). 

On the other hand, the right endpoint of this gap is $[2;1,2,1,1,2,1,2,1,1,2,2,\overline{2,1,2,1,1,2}]+[0;1,1,2,2,\overline{2,1,2,1,1,2}]$ since
the minimum Markov value of a sequence with center $1212^*1122$ is attained at a sequence of the type $x^t22112^{\#}1211212^*1122y$, where $x$ and $y$ are infinite sequences. By Lemma \ref{l.plateaux}, the minimum value is attained when $[0;1,1,2,2,y]=[0;1,1,2,2,x]$ are minimum, so it should correspond to $\overline{211212}221121211212^*1122\overline{212112}$, and the minimum value is $[2;1,1,2,2,\overline{2,1,2,1,1,2}]+[0;1,2,1,1,2,1,2,1,1,2,2,\overline{2,1,2,1,1,2}]=\frac{457878845-1713407\sqrt{87}}{133663998}=3.3060304573471179...$. It coincides with the right endpoint of the plateau since up to any value strictly larger than this, we aggregate to the regular Cantor set of continued fractions with coefficients $1$ and $2$ with the factors $21212$, $111212$, $212111$, $12121122$ and $22112121$ forbidden the finite word $(221121)^n 222112121121211222 (121122)^n$, for $n$ large enough, which connects to the previous regular Cantor set on both sides, so increasing its Hausdorff dimension.

\subsection{Between the largest plateaux: end of proof of Theorem~\ref{t.A} modulo dimension estimates at checkpoints}
\label{ss:inbetween}
For two sets $A, B \subset \mathbb R$ we say that $A \prec B$ if $x<y$ for all
$x\in A$ and $y\in B$. We would like now to list the twelve plateaux of~$d(t)$ in order of
their \emph{occurrence}
on the real line (from left to right) 
$$
P_0 \prec P_{11} \prec P_7 \prec P_{12} \prec P_3 \prec P_4 \prec P_2 \prec P_5 \prec
P_{10} \prec P_8 \prec P_6 \prec P_{13} \prec P_9 \prec P_1. 
$$ 
In order to show that $P_i$, $2\leq i\leq 11$, are indeed the~$10$ largest plateaux we provide estimates
on the values of the dimension function~$d$ on the intervals in between. 
To setup the computation, we choose the intervals between the two plateaux
$P_i = (a_i,b_i) \prec P_j=(a_j, b_j)$ given by 
$$
\intpx{i}{j}=[10^{-4}([10^4 b_{i}]-1),10^{-4} ([10^4 a_j]+1)],
$$
where $[x]$ stands for the integer part of the number~$x$. 
We obtain the intervals $\intpx{i}{j}$ by straightforward
computation: it gives, for example, $ \intpx{0}{11} =  [2.9999,3.0066] $, 
$ \intpx{11}{7} =  [3.0117,3.0171] $, and so on. 
\pvshort{In Appendix~\ref{ap:table} we list lower and upper bounds on the
dimension function at certain ``checkpoints'', chosen at the distance $0.0024$
apart (with a few exceptions), to show that there is no other plateaux larger
than~$0.005$. 
This completes the proof of Theorem~\ref{t.A}. }

In the last~\S\ref{s.approx} we give an algorithm for computing the dimension
function~$d$. 

\section{Numerical method}
\label{s.approx}
\pvlong{Computation of the dimension function~$d$ is done by obtaining lower and upper
bounds on the values of~$d$ at a large number of points $s_k \in (3, t_1)$ and
is based on the following two observations~\cite[\S2.1]{MMPV}. 
\\
\indent First, we fix the maximal length of forbidden strings~$n$, say, $n=16$ (larger
value of~$n$ will result in a more accurate approximation of~$d$). 
Then for a point $s \in (a,b)$ we build recursively a finite set of finite
``forbidden'' sequences $\beta_{-n} \beta_{-n+1} \ldots \beta_{-1} \beta_0
\beta_1 \ldots \beta_n$ so that any infinite extension $\alpha \in \{1,
2\}^{\mathbb  Z} $ with the property that $\alpha_j = \beta_j$ for all  $-n \le j
\le n$ we have that  
$
\lambda_0(\alpha) = [0; \alpha_{-1}, \alpha_{-2}, \ldots] + [\alpha_0; \alpha_1, \alpha_2, \ldots] > s.
$
\\
\indent Clearly, after excluding from $E_2$ all irrational numbers whose continued 
fraction expansion contains a ``forbidden'' string, we obtain a Cantor set $K
\subsetneq E_2$ such that $M \cap (-\infty, s) \subset 2 + K + K$.
Recall that $\dim (K + K) \le \dim K + \dim_B K$, where $\dim_B K$ denotes the upper box dimension.
It is known that $\dim_B K = \dim K$ for these types of sets (cf. Chapter 4 of
Palis--Takens book~\cite{PT}) and hence an upper bound on~$\dim K$ gives us an
upper bound 
$$
d(s) = \dim((-\infty,s) \cap M  ) \le 2\dim K.
$$
Once the set of forbidden sequences is constructed, we may compute the maximal 
Markov value of strings which do not contain a substring from~$K$ and denote it
by~$s^\prime$.
It was shown in~\cite[proof of Lemma~3]{Mor18} that the dimension of~$K$ gives us a lower
bound on $d(s^\prime)$ via the inequality
$$
\min\{2 \dim K, 1\} \le \dim (( -\infty, s^\prime) \cap M ) = d(s^\prime) . 
$$
Repeating the procedure described above for~$n$ sufficiently large and for a
sequence of values~$\{s_k\} \in (a,b) $, we obtain an
upper bound for $d(s_k)$ and a lower bound for $d(s_k^\prime)$. Monotonicity
of~$d$ implies that $\max_{s_k^\prime < s} d(s_k^\prime)$ is a lower bound
for~$d(s)$. This allows us to obtain an accurate approximation to the
dimension function. The pseudocode for the recursive function that constructs the set of forbidden
words is given in Algorithm~\ref{alg:forbid} below.}

\begin{algorithm}
    \caption{Recursive computation of the set of forbidden strings.}\label{alg:forbid}
    \begin{algorithmic}[1]
        \Function{ForbiddenWords}{$\beta$, $lmax$, $mmax$, $s$, $n$ }
       
        \Require{
 $s, n$: threshold and the maximal length of the string;
 $\beta$: the current string; 
 $mmax$: the current largest Markov number of the strings that do not contain
        forbidden substrings;
        $lmax$: the current maximal length of the forbidden strings.}
       
        \If{ $|\beta| > 0$ } 
        
        \Let{$(a,b)$}{convex hull of $\lambda_0$ obtained from continuations of
        $\beta$;}

        \If{ $a > s$ }             
            \State Save $\beta$ and its reverse; \Comment{Markov values are larger than~$s$} 
            \If{ $lmax < |\beta|$ } \Let{$lmax$}{$|\beta|$}  \EndIf
        \ElsIf{ $b < s$ } \Comment{Markov values are smaller than~$s$} 
        \If{ $b > mmax$ } \Let{$mmax$}{$b$} \EndIf
        \ElsIf{ $|\beta| < 2n+1 $} 
        \If{ $|\beta|$ is even } \Comment{add one symbol on the right} 
               \State \Call{ForbiddenWords}{$\beta1$, $lmax+1$, $mmax$, $s$, $n$ }
               \State \Call{ForbiddenWords}{$\beta2$, $lmax+1$, $mmax$, $s$, $n$ }
               \Else \Comment{add one symbol on the left} 
               \State \Call{ForbiddenWords}{$1\beta$, $lmax+1$, $mmax$, $s$, $n$ }
               \State \Call{ForbiddenWords}{$2\beta$, $lmax+1$, $mmax$, $s$, $n$ }
            \EndIf
            \ElsIf{ $b > mmax$} \Comment{$|\beta|=2n+1$ and $s$ is in the convex
            hull of~$\lambda_0$.} 
            \Let{$mmax$}{$b$} 
        \EndIf
        \Else \Comment{$\beta$ is the empty word}
        \State \Call{ForbiddenWords}{$1$, $1$, $0$, $s$, $n$ }
        \State \Call{ForbiddenWords}{$2$, $1$, $0$, $s$, $n$ }
        \EndIf
        \EndFunction

    \end{algorithmic}
\end{algorithm}

\pvlong{In order to confirm that all continuations of a certain string result in a
Markov value greater than the chosen threshold, the following 
fact~\cite[Lemma~2.4]{MMPV} can be used. 
\\
\begin{lemma}
    \label{lem:intlim}
    For any sequence $\alpha \in \{1,2\}^{\mathbb{Z}}$ we have an upper bound
    $$
    \lambda_0(\alpha) \le 
    \begin{cases}
    [\alpha_0; \alpha_1 \dots \alpha_j \overline{12}] + 
    [0;\alpha_{-1} \dots \alpha_{-k} \overline{12}], &\mbox{ if } k \mbox{ and }
    j \mbox{ are even,} \\
    [\alpha_0; \alpha_1 \dots \alpha_j \overline{21}] + 
    [0;\alpha_{-1} \dots \alpha_{-k} \overline{12}], &\mbox{ if } k \mbox{ is
    even and } j \mbox{ is odd,} \\ 
    [\alpha_0; \alpha_1 \dots \alpha_j \overline{12}] + 
    [0;\alpha_{-1} \dots \alpha_{-k} \overline{21}], &\mbox{ if } k \mbox{ is
    odd and } j \mbox{ is even,} \\ 
    [\alpha_0; \alpha_1 \dots \alpha_j \overline{21}] + 
    [0;\alpha_{-1} \dots \alpha_{-k} \overline{21}], &\mbox{ if } k \mbox{ and }
    j \mbox{ are odd;} 
    \end{cases}
    $$
    and a lower bound 
    $$
    \lambda_0(\alpha) \ge 
    \begin{cases}
    [\alpha_0; \alpha_1 \dots \alpha_j \overline{21}] + 
    [0;\alpha_{-1} \dots \alpha_{-k} \overline{21}], &\mbox{ if } k \mbox{ and }
    j \mbox{ are even,} \\
    [\alpha_0; \alpha_1 \dots \alpha_j \overline{12}] + 
    [0;\alpha_{-1} \dots \alpha_{-k} \overline{21}], &\mbox{ if } k \mbox{ is
    even and } j \mbox{ is odd,} \\ 
    [\alpha_0; \alpha_1 \dots \alpha_j \overline{21}] + 
    [0;\alpha_{-1} \dots \alpha_{-k} \overline{12}], &\mbox{ if } k \mbox{ is
    odd and } j \mbox{ is even,} \\ 
    [\alpha_0; \alpha_1 \dots \alpha_j \overline{12}] + 
    [0;\alpha_{-1} \dots \alpha_{-k} \overline{12}], &\mbox{ if } k \mbox{ and }
    j \mbox{ are odd.} 
    \end{cases}
    $$
\end{lemma}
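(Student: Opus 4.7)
The plan is to reduce the estimate to an elementary monotonicity property of truncated continued fractions. Recall that
$$\lambda_0(\alpha) = [\alpha_0; \alpha_1, \alpha_2, \ldots] + [0; \alpha_{-1}, \alpha_{-2}, \ldots].$$
For any $j, k \ge 0$, introduce the tails $T := [\alpha_{j+1}; \alpha_{j+2}, \ldots]$ and $T' := [\alpha_{-k-1}; \alpha_{-k-2}, \ldots]$, so that
$$\lambda_0(\alpha) = [\alpha_0; \alpha_1, \ldots, \alpha_j, T] + [0; \alpha_{-1}, \ldots, \alpha_{-k}, T'].$$
Since all partial quotients lie in $\{1,2\}$, both $T$ and $T'$ belong to the interval $\bigl[[\overline{1,2}], [\overline{2,1}]\bigr]$, the extrema being attained precisely at the alternating sequences: $[\overline{1,2}]$ is the smallest and $[\overline{2,1}]$ is the largest value of a $\{1,2\}$-valued continued fraction, as one sees by the standard argument that maximising (resp.\ minimising) alternately requires the next partial quotient to be the largest (resp.\ smallest) allowed value.

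The second ingredient is that the functions
$$F_j(T) := [\alpha_0; \alpha_1, \ldots, \alpha_j, T], \qquad G_k(T') := [0; \alpha_{-1}, \ldots, \alpha_{-k}, T']$$
are strictly monotonic in their last argument, with monotonicity alternating with $j$ and $k$. I would prove this by induction based on the identity $F_{j+1}(T) = F_j(\alpha_{j+1} + 1/T)$, noting that $x \mapsto \alpha + 1/x$ reverses monotonicity. The base cases $F_0(T) = \alpha_0 + 1/T$ (decreasing) and $F_1(T) = \alpha_0 + T/(\alpha_1 T + 1)$ (increasing) anchor the induction, giving that $F_j$ is decreasing for $j$ even and increasing for $j$ odd. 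The analogous argument shows that $G_k$ is increasing for $k$ odd and decreasing for $k$ even.

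Combining the two ingredients yields the eight claimed bounds. For the upper bound on $\lambda_0(\alpha) = F_j(T) + G_k(T')$, one chooses $T$ and $T'$ inside $\bigl[[\overline{1,2}], [\overline{2,1}]\bigr]$ so as to maximise each summand: when $j$ is even (resp.\ odd), maximality of $F_j$ forces $T = [\overline{1,2}]$ (resp.\ $T = [\overline{2,1}]$), which amounts to appending $\overline{12}$ (resp.\ $\overline{21}$) after $\alpha_j$; symmetrically for $G_k$ depending on the parity of $k$. The lower bound is obtained by the dual choice. The only delicate step is getting the parities straight and matching the correct periodic word $\overline{12}$ or $\overline{21}$ to each of the four parity combinations, but once the monotonicity is stated carefully this is a direct verification, and the four cases of the upper bound (and of the lower bound) are precisely those recorded in the statement.
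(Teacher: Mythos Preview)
Your argument is correct: the two ingredients (that every $\{1,2\}$-valued continued fraction lies between $[\overline{1,2}]$ and $[\overline{2,1}]$, and that $T\mapsto[\alpha_0;\alpha_1,\dots,\alpha_j,T]$ alternates monotonicity with the parity of $j$) immediately yield all eight bounds, and your parity bookkeeping matches the statement. The paper itself gives no proof of this lemma, simply quoting it as \cite[Lemma~2.4]{MMPV}, so there is nothing to compare against; your proof is exactly the standard elementary verification one would expect.
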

We now want to illustrate the recursive procedure for constructing the set of forbidden
strings using. 
\begin{example}
Let us fix $n=6$ and consider a point $s=3.2658$. Then we take 
$\beta_0 = 1$ and compute lower and upper bounds on $\lambda_0$ using
Lemma~\ref{lem:intlim}, which gives us $1.73 < \lambda_0(\beta) < 2.47$. Since both values less than~$s$, we make a record of the upper
bound, and consider $\beta_0 = 2$. This choice gives us $2.73 <
\lambda_0(\beta) < 3.47$, so we consider continuation $\beta_{-1}\beta_0 = 12$ that
gives $2.94 < \lambda_0(\beta) < 3.47$ and then add $\beta_1 = 1$ that gives $3.15
< \lambda_0(\beta) < 3.47$. We proceed, adding at odd steps a digit on
the left and at even steps a digit on the right, until either we get an 
interval for~$\lambda_0$ that lies to the right of~$s$ or to the left of~$s$, 
or we obtain a string of length $2n+1$ continuations of which contain~$s$.
\\
\begin{minipage}{50mm}
\begin{tabular}{c|rl}
  $|\beta|$ & $\beta$ \\
  \hline
$ 13 $ &  $1   1   2   1   1   1   2^* $ & $\kern-11pt   1   1   1   2   1   2$  \\
$ 13 $ &  $2   1   2   1   1   1   2^* $ & $\kern-11pt   1   1   1   2   1   1$  \\
$ 12 $ &  $2   1   2   1   1   1   2^* $ & $\kern-11pt  1   1   1   2   1$  \\
$ 12 $ &  $1   2   1   1   1   2^* $ & $\kern-11pt  1   1   1   2   1   2$  \\
$  5 $ &  $1   1   2^* $ & $\kern-11pt  1   2$  \\
$  5 $ &  $2   1   2^* $ & $\kern-11pt  1   1$  \\
$  4 $ &  $2   1   2^* $ & $\kern-11pt  1$  \\
$  4 $ &  $1   2^* $ & $\kern-11pt  1   2$  \\
  \hline
\end{tabular}
\end{minipage}%
\begin{minipage}{110mm}
    \noindent For $s=3.2658$ and $n=6$ the function in Algorithm~\ref{alg:forbid} gives~$8$ forbidden words, that are displayed on the right in the order they
have been constructed. It also gives a maximum Markov value for non-excluded
strings~$s^\prime = 3.2660$. We see that the first~$6$ strings are actually redundant, since
they contain the strings of length~$4$ as substrings. After further reduction we
are left with only~$2$ forbidden strings of length~$4$, $2121$ and~$1212$. The
algorithm developed in~\cite{PV20} gives an estimate $\dim K \le 0.46933$. We
therefore conclude that $d(3.2658) \le 0.93866 \le d(3.2660)$. 
\end{minipage}
\end{example}}

\appendix

\section{Estimates on the dimension function at checkpoints}
\label{ap:table}




\begin{longtable}{|c|c|c|c|c|}

  \hline
   Plateau or an  & Length & Checkpoints & \multicolumn{2}{c|}{Hausdorff dimension} \\\hhline{~|~|~|-|-}
   intermediate interval & of the plateau & in the interval & lower bound & upper bound \\\hline
  \endfirsthead
  \multicolumn{5}{@{}l}{\ldots continued}\\\hline
    Plateau or an  & Length & Checkpoints & \multicolumn{2}{c|}{Hausdorff
    dimension} \\
    \hhline{~|~|~|-|-}
   intermediate interval & of the plateau & in the interval & lower bound &
   upper bound \\
    \hline
\endhead
 \multirow{3}{*}{  $ \intpx{0}{11} =  [2.9999,3.0066] $} 
  &  &  $3.0015 $  &  $ 0.432861 $ & $ 0.432861 $ \\
  &  &  $3.0031 $  &  $ 0.455261 $ & $ 0.455261 $ \\
  &  &  $3.0055 $  &  $ 0.510071 $ & $ 0.510071 $ \\
    \hline
        $P_{11}$ &  $0.00534347$ & & \multicolumn{2}{c|}{0.536334}   \\
    \hline
   \multirow{2}{*}{$ \intpx{11}{7} =  [3.0117,3.0171] $ } 
  &  &  $3.0126 $  &  $ 0.545441 $ & $ 0.545441 $ \\
  &  &  $3.0150 $  &  $ 0.551178 $ & $ 0.551178 $ \\
    \hline
        $P_7$ &  $0.00963815$ & & \multicolumn{2}{c|}{0.569770}  \\
    \hline
  \multirow{8}{*}{ $ \intpx{7}{12} =  [3.0265,3.0451] $  } 
  &  &  $3.0273 $  &  $ 0.577393 $ & $ 0.577393 $ \\
  &  &  $3.0297 $  &  $ 0.589935 $ & $ 0.589935 $ \\
  &  &  $3.0321 $  &  $ 0.617218 $ & $ 0.620728 $ \\
  &  &  $3.0345 $  &  $ 0.659302 $ & $ 0.659302 $ \\
  &  &  $3.0369 $  &  $ 0.662750 $ & $ 0.665924 $ \\
  &  &  $3.0393 $  &  $ 0.686493 $ & $ 0.686493 $ \\
  &  &  $3.0417 $  &  $ 0.692932 $ & $ 0.692932 $ \\
  &  &  $3.0441 $  &  $ 0.700073 $ & $ 0.700073 $ \\
   \hline 
       $P_{12}$ &  $0.0041857$  &  & \multicolumn{2}{c|}{0.709914}  \\
  \hline
  $ \intpx{12}{3} =  [3.0490,3.0509] $  & \multicolumn{4}{c|}{No estimate needed:
  $|\intpx{12}{3}| = 0.0019 < 0.005$. \phantom{\Large $R_{1}^2$} } \\
  \hline
       $P_3$ & $0.03256158$ & & \multicolumn{2}{c|}{0.728108}  \\
  \hline
  \multirow{3}{*}{ $ \intpx{3}{4}  =  [3.0832,3.0915]  $ } 
  &  &  $3.0856 $  &  $ 0.730499 $ & $ 0.730499 $ \\
  &  &  $3.0880 $  &  $ 0.739929 $ & $ 0.739929 $ \\
  &  &  $3.0904 $  &  $ 0.742004 $ & $ 0.742004 $ \\
  \hline
          $P_4$  & $0.02464164$ & & \multicolumn{2}{c|}{0.750628} \\
  \hline
  \multirow{5}{*}{ $ \intpx{4}{2} = [3.1160,3.1299]$ } 
  &  &  $3.1184 $  &  $ 0.765778 $ & $ 0.766235 $ \\
  &  &  $3.1208 $  &  $ 0.775299 $ & $ 0.775299 $ \\
  &  &  $3.1232 $  &  $ 0.783356 $ & $ 0.784485 $ \\
  &  &  $3.1256 $  &  $ 0.796478 $ & $ 0.796478 $ \\
  &  &  $3.1280 $  &  $ 0.807770 $ & $ 0.807770 $ \\
  \hline
          $P_2$ & $0.03673544$ & & \multicolumn{2}{c|}{0.812150}  \\
  \hline
  \multirow{3}{*}{ $ \intpx{2}{5} = [3.1664,3.1743] $ } 
  &  &  $3.1672 $  &  $ 0.812927 $ & $ 0.813019 $ \\
  &  &  $3.1696 $  &  $ 0.815735 $ & $ 0.815735 $ \\
  &  &  $3.1720 $  &  $ 0.818542 $ & $ 0.818542 $ \\
  \hline
        $P_5$ & $0.01928014$ & & \multicolumn{2}{c|}{0.827194}  \\
    \hline
  \multirow{11}{*}{  $ \intpx{5}{10} =  [3.1934,3.2170] $ } 
  &  &  $3.1950 $  &  $ 0.830841 $ & $ 0.830841 $ \\
  &  &  $3.1974 $  &  $ 0.830841 $ & $ 0.831421 $ \\
  &  &  $3.1998 $  &  $ 0.843140 $ & $ 0.846527 $ \\
  &  &  $3.2022 $  &  $ 0.851990 $ & $ 0.851990 $ \\
  &  &  $3.2046 $  &  $ 0.859192 $ & $ 0.861786 $ \\
  &  &  $3.2070 $  &  $ 0.868286 $ & $ 0.869415 $ \\
  &  &  $3.2094 $  &  $ 0.869629 $ & $ 0.869629 $ \\
  &  &  $3.2118 $  &  $ 0.876587 $ & $ 0.877350 $ \\
  &  &  $3.2142 $  &  $ 0.878082 $ & $ 0.878082 $ \\
  &  &  $3.2166 $  &  $ 0.882294 $ & $ 0.883118 $ \\
  &  &  $3.2190 $  &  $ 0.888153 $ & $ 0.888794 $ \\ 
  \hline
         $P_{10}$ & $0.00535406$ & & \multicolumn{2}{c|}{0.889660}  \\
   \hline
 \multirow{12}{*}{  $ \intpx{10}{8} =  [3.2248,3.2531]     $  }  
  &  &  $3.2265 $  &  $ 0.890961 $ & $ 0.891022 $ \\
  &  &  $3.2289 $  &  $ 0.892151 $ & $ 0.892151 $ \\
  &  &  $3.2313 $  &  $ 0.895325 $ & $ 0.897522 $ \\
  &  &  $3.2337 $  &  $ 0.901459 $ & $ 0.901825 $ \\
  &  &  $3.2361 $  &  $ 0.903961 $ & $ 0.905518 $ \\
  &  &  $3.2385 $  &  $ 0.907532 $ & $ 0.909607 $ \\
  &  &  $3.2409 $  &  $ 0.910797 $ & $ 0.910797 $ \\
  &  &  $3.2433 $  &  $ 0.912079 $ & $ 0.913177 $ \\
  &  &  $3.2457 $  &  $ 0.917023 $ & $ 0.917480 $ \\
  &  &  $3.2481 $  &  $ 0.919891 $ & $ 0.921661 $ \\
  &  &  $3.2505 $  &  $ 0.926117 $ & $ 0.928192 $ \\
  &  &  $3.2529 $  &  $ 0.931702 $ & $ 0.932281 $ \\
   \hline
      $P_{8}$  &  $0.00707091$ &  & \multicolumn{2}{c|}{0.932262}  \\
   \hline
   \multirow{2}{*}{ $ \intpx{8}{6}  =  [3.2600,3.2660] $ } 
  &  &  $3.2624 $  &  $ 0.934113 $ & $ 0.934113 $ \\
  &  &  $3.2648 $  &  $ 0.937103 $ & $ 0.937103 $ \\
   \hline
   $P_{6}$ &  $0.01526356 $ &  & \multicolumn{2}{c|}{0.938646} \\
 \hline
   \multirow{9}{*}{ $ \intpx{6}{13} =  [3.2811,3.3021] $ }
  &  &  $3.2827 $  &  $ 0.938995 $ & $ 0.939240 $ \\
  &  &  $3.2851 $  &  $ 0.944000 $ & $ 0.945496 $ \\
  &  &  $3.2875 $  &  $ 0.949036 $ & $ 0.950256 $ \\
  &  &  $3.2899 $  &  $ 0.954529 $ & $ 0.956268 $ \\
  &  &  $3.2923 $  &  $ 0.960846 $ & $ 0.960846 $ \\
  &  &  $3.2947 $  &  $ 0.965393 $ & $ 0.965973 $ \\
  &  &  $3.2971 $  &  $ 0.966095 $ & $ 0.966095 $ \\
  &  &  $3.2995 $  &  $ 0.966187 $ & $ 0.966187 $ \\
  &  &  $3.3019 $  &  $ 0.967072 $ & $ 0.967834 $ \\
 \hline
   $P_{13}$ & $0.00403244 $ &  & \multicolumn{2}{c|}{0.967812} \\
 \hline
 $ \intpx{13}{9} =  [3.3059,3.3085] $ & \multicolumn{4}{c|}{No estimate needed:
  $|\intpx{13}{9}| = 0.0036 < 0.005$. \phantom{\Large $R_{1}^2$} } \\
 \hline
   $P_{9}$ &  $0.00673778 $ &  & \multicolumn{2}{c|}{0.971588} \\
 \hline
   \multirow{8}{*}{ $ \intpx{9}{1} = [3.3150,3.3344] $ } 
   &  &  $3.3158 $  &  $  0.971619 $  &  $  0.972382 $ \\
   &  &  $3.3182 $  &  $  0.976440 $  &  $  0.976929 $ \\
   &  &  $3.3206 $  &  $  0.979279 $  &  $  0.980682 $ \\
   &  &  $3.3230 $  &  $  0.985535 $  &  $  0.986542 $ \\
   &  &  $3.3254 $  &  $  0.990051 $  &  $  0.991913 $ \\
   &  &  $3.3278 $  &  $  0.994781 $  &  $  0.995270 $ \\
   &  &  $3.3302 $  &  $  0.995544 $  &  $  0.995575 $ \\
   &  &  $3.3326 $  &  $  0.995605 $  &  $  0.995605 $ \\
 \hline
\end{longtable}

\newpage

\section{Plot of the dimension function}
\label{ap:plot}

\vspace*{-7mm}
\begin{figure}[h!]
\caption{A sketch of the graph of the dimension function~$d$ on the intervals
between the plateaux. The blue curve
corresponds to the upper bound, the red curve corresponds to the lower bound.
Estimates are obtained using a mesh of size~$0.00005$. We would like to point
out that all plots are not to scale and, moreover, scale on axis is different on
every plot. }
\centering
\begin{subfigure}{0.28\textwidth}
  \includegraphics{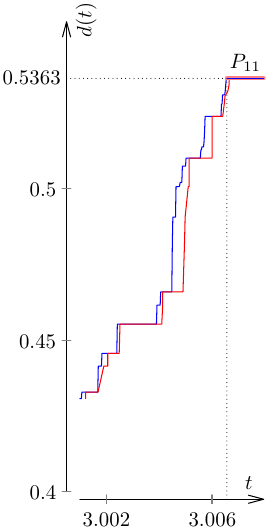}
  \subcaption{ $t \in \intpx{0}{11}$} 
  \label{fig:int011}
\end{subfigure}
\begin{subfigure}{0.2\textwidth}
  \includegraphics{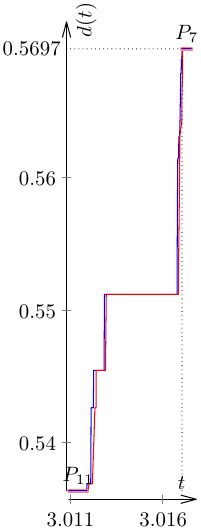}
  \subcaption{ $t \in \intpx{11}{7}$} 
  \label{fig:int117}
\end{subfigure}
\begin{subfigure}{0.28\textwidth}
  \includegraphics{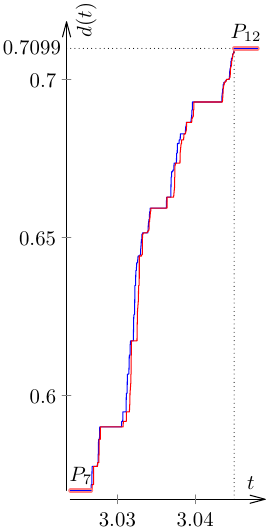}
  \subcaption{ $t \in \intpx{7}{12}$} 
  \label{fig:int712}
\end{subfigure} 
\begin{subfigure}{0.2\textwidth}
  \includegraphics{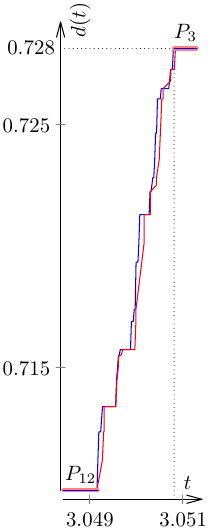}
  \subcaption{ $t \in \intpx{12}{3}$} 
  \label{fig:int123}
\end{subfigure} \\ 
\begin{subfigure}{0.3\textwidth}
  \includegraphics{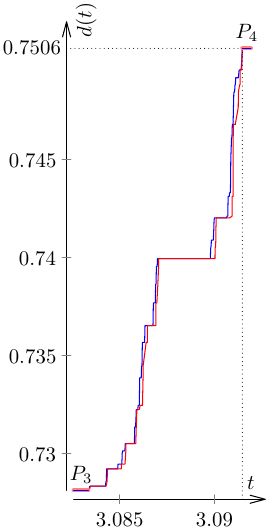}
  \subcaption{ $t \in \intpx{3}{4}$} 
  \label{fig:int34}
\end{subfigure}
\begin{subfigure}{0.3\textwidth}
  \includegraphics{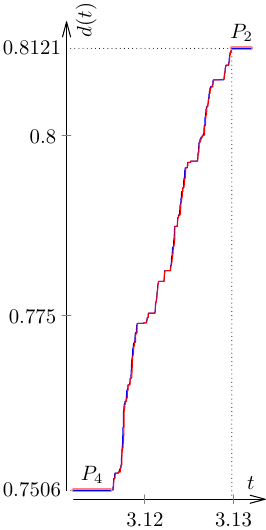}
  \subcaption{ $t \in \intpx{4}{2}$} 
  \label{fig:int42}
\end{subfigure}
\begin{subfigure}{0.3\textwidth}
  \includegraphics{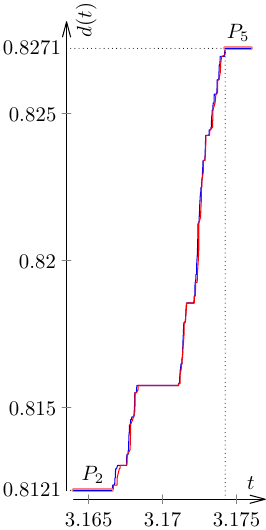}
  \subcaption{ $t \in \intpx{2}{5}$} 
  \label{fig:int25}
\end{subfigure}
\end{figure}%
\begin{figure}\ContinuedFloat
\begin{subfigure}{0.3\textwidth}
  \includegraphics{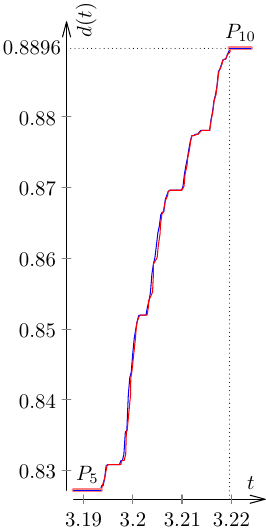}
  \subcaption{ $t \in \intpx{5}{10}$} 
  \label{fig:int510}
\end{subfigure}
\begin{subfigure}{0.3\textwidth}
  \includegraphics{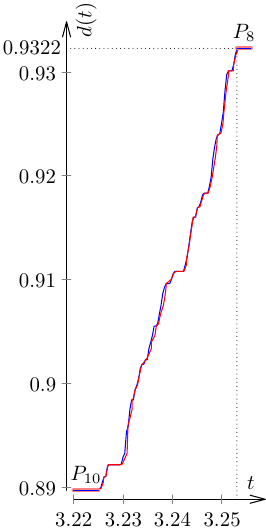}
  \subcaption{ $t \in \intpx{10}{8}$} 
  \label{fig:int108}
\end{subfigure}
\begin{subfigure}{0.3\textwidth}
  \includegraphics{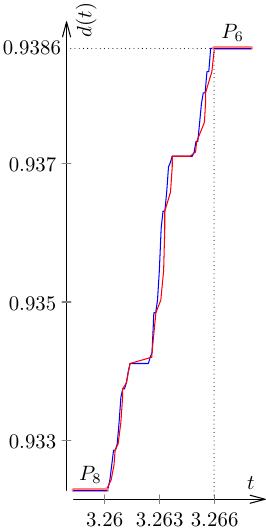}
  \subcaption{ $t \in \intpx{8}{6}$} 
  \label{fig:int68}
\end{subfigure}
\end{figure}
\begin{figure}\ContinuedFloat
\begin{subfigure}{0.3\textwidth}
  \includegraphics{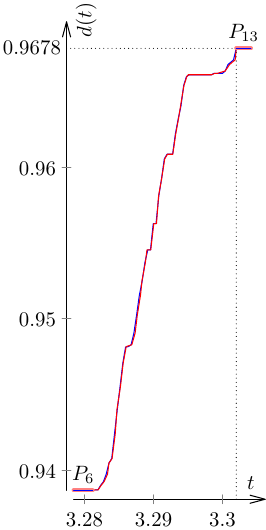}
  \subcaption{ $t \in \intpx{6}{13}$} 
  \label{fig:int613}
\end{subfigure}
\begin{subfigure}{0.3\textwidth}
  \includegraphics{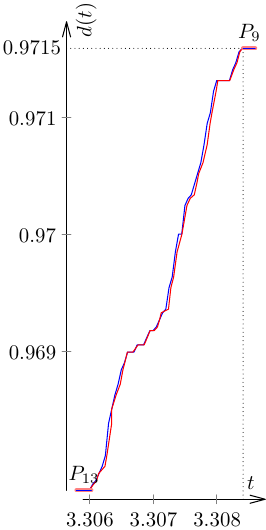}
  \subcaption{ $t \in \intpx{13}{9}$} 
  \label{fig:int139}
\end{subfigure}
\begin{subfigure}{0.3\textwidth}
  \includegraphics{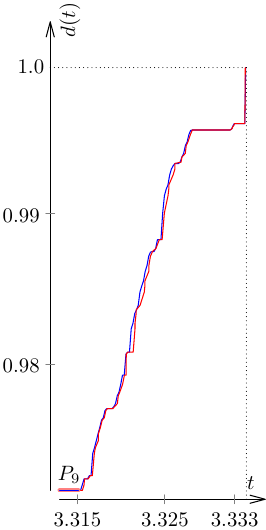}
  \subcaption{ $t \in \intpx{9}{1}$} 
  \label{fig:int91}
\end{subfigure}
\end{figure}

\clearpage

\end{document}